\documentclass{article}

\usepackage{geometry}
\newgeometry{vmargin={1in}, hmargin={1.25in,1.25in}}
\usepackage{microtype} 
\usepackage{graphicx}
\usepackage[makeroom]{cancel}
\usepackage{booktabs} 
\usepackage{makecell}
\usepackage[colorlinks=true,citecolor=blue]{hyperref}
\usepackage{multirow}
\usepackage[table, svgnames, dvipsnames]{xcolor}
\usepackage{makecell, cellspace, caption}
\usepackage{booktabs,caption}
\usepackage[flushleft]{threeparttable}
\usepackage{amsmath, nccmath}
\usepackage{amssymb}
\usepackage{mathtools}
\usepackage{enumitem}
\usepackage{amsthm}
\usepackage{lipsum}
\usepackage[capitalize,noabbrev]{cleveref} 

\usepackage{amsfonts}      
\usepackage{cite}
\usepackage{xcolor}
\usepackage{algorithm}
\usepackage{algpseudocode}
\usepackage{dsfont}
\usepackage{multicol}
\usepackage{cuted} 
\usepackage{booktabs} 
\usepackage{pifont} 
\usepackage{footnote} 
\makesavenoteenv{table} 
\usepackage{threeparttable}  
\usepackage{subcaption}
\usepackage{amssymb} 
\usepackage{caption}
\captionsetup{font=small} 
\allowdisplaybreaks

\newtheorem{theorem}{Theorem}
\newtheorem{corollary}{Corollary}
\newtheorem{lemma}{Lemma}
\newtheorem{assumption}{Assumption}
\newtheorem{proposition}{Proposition}
\newtheorem{remark}{Remark}
\newtheorem{example}{Example} 

\def\R{\mathbb{R}}
\def\E{\mathbb{E}}

\def\P{\mathbb{P}}

\def\I{\mathbf{I}}


\newcommand{\bb}{\boldsymbol{b}}

\newcommand{\e}{\boldsymbol{e}}

\newcommand{\m}{\boldsymbol{m}}

\newcommand{\br}{\boldsymbol{r}}

\newcommand{\bu}{\boldsymbol{u}}
\newcommand{\bv}{\boldsymbol{v}}
\newcommand{\w}{\boldsymbol{w}}
\newcommand{\x}{\boldsymbol{x}}
\newcommand{\y}{\boldsymbol{y}}
\newcommand{\z}{\boldsymbol{z}}

\def\bA{\boldsymbol{A}}
\def\bB{\boldsymbol{B}}
\def\bC{\boldsymbol{C}}

\def\bH{\boldsymbol{H}}
\def\bI{\boldsymbol{I}}

\def\bQ{\boldsymbol{Q}}

\def\bS{\boldsymbol{S}}

\def\bW{\boldsymbol{W}}
\def\bX{\boldsymbol{X}}


\def\cE{\mathcal{E}}
\def\cF{\mathcal{F}}

\def\cO{O}

\def\cV{\mathcal{V}}

 
\newcommand{\msf}{\mathsf}


\DeclareMathOperator{\clip}{\textsf{clip}}

\def\diag{\msf{Diag}}

\newcommand{\bxi}{\boldsymbol{\xi}}
\newcommand{\bet}{\boldsymbol{\eta}}
\newcommand{\bPhi}{\boldsymbol{\Phi}}
\newcommand{\bPsi}{\boldsymbol{\Psi}}

\newcommand{\kaph}{\kappa_{\phi}}

\newcommand{\cb}{c_\beta}
\newcommand{\cp}{c_\varphi}
 
\newcommand{\cet}{c_\eta}

\newcommand{\gd}{\nabla}

\newcommand{\one}{\mathbf{1}}
\newcommand{\zero}{\mathbf{0}}
\newcommand{\ox}{\bar{\x}}

\newcommand{\sen}{\operatorname{SClip-EF-Network}}

\newcommand{\argmin}{\mathop{\mathrm{arg\,min}}}

\newcommand{\norm}[1]{\|#1\|}

\newcommand{\cmark}{\text{\ding{51}}}
\newcommand{\xmark}{\text{\ding{55}}}

\newcommand{\sce}{\text{SClip-EF}}
\newcommand{\scen}{\text{SClip-EF-Network}} 
\newcommand{\pcss}{\text{Prox-clipped-SGD-shift}}

\title{
        Smoothed Gradient Clipping and Error Feedback for Decentralized Optimization under Symmetric Heavy-Tailed Noise
}

\author{Shuhua Yu\thanks{
     Department of Electrical and Computer Engineering,
     Carnegie Mellon University, Pittsburgh, PA 15213, USA. Emails:
     \texttt{\{shuhuay, soummyak\}@andrew.cmu.edu}}
     \and
     Du\u san Jakoveti\'c\thanks{
     Faculty of Sciences, Department of Mathematics and Informatics, University of Novi Sad, Novi Sad, 21000, Serbia. Email: \texttt{dusan.jakovetic@dmi.uns.ac.rs}}
     \and
     Soummya Kar\footnotemark[1]} 
\date{November 11, 2024}

\begin{document}
\maketitle
\vskip 0.3in

\begin{abstract}
Motivated by understanding and analysis of large-scale machine learning under heavy-tailed gradient noise, we study decentralized optimization with gradient clipping, i.e., in which certain clipping operators are applied to the gradients or gradient estimates computed from local nodes prior to further processing. While vanilla gradient clipping has proven effective in mitigating the impact of heavy-tailed gradient noise in non-distributed setups, it incurs bias that causes convergence issues in heterogeneous distributed settings. To address the inherent bias introduced by gradient clipping, we develop a smoothed clipping operator, and propose a decentralized gradient method equipped with an error feedback mechanism, i.e., the clipping operator is applied on the difference between some local gradient estimator and local stochastic gradient. We consider strongly convex and smooth local functions under symmetric heavy-tailed gradient noise that may not have finite moments of order greater than one. We show that the proposed decentralized gradient clipping method achieves a mean-square error (MSE) convergence rate of $\cO(1/t^\delta)$, $\delta \in (0, 2/5)$, where the exponent $\delta$ is independent of the existence of higher order gradient noise moments $\alpha > 1$ and lower bounded by some constant dependent on condition number. To the best of our knowledge, this is the first MSE convergence result for decentralized gradient clipping under heavy-tailed noise without assuming bounded gradient. Numerical experiments validate our theoretical findings. 
\end{abstract}
\section{Introduction} 
In this paper, we address heterogeneous decentralized optimization under heavy-tailed \footnote{By heavy-tailed distribution, we mean that the noise distribution has a heavier tail than any exponential distribution, i.e., a random variable $X$ is called heavy-tailed if for any constant $a > 0$, $\limsup_{x \rightarrow \infty} \P(X > x)e^{ax} = \infty$  \cite{nair2022fundamentals}.} gradient noise. We consider a network of $n$ nodes where each node holds a local function $f_i$ and accesses its stochastic gradient with heavy-tailed additive noise via a local oracle. Collectively, these nodes aim to minimize the aggregated function $f :=(1/n)\sum_{i=1}^n f_i$. We assume a general decentralized network where nodes communicate only with their direct neighbors, according to a predefined network topology. Specifically, when the network topology is a complete graph, this decentralized setup simplifies to a server-client setup, where one node acts as a server and all other nodes communicate directly with it. 




The above distributed average-cost minimization formulation has become a key approach in the emerging distributed machine learning paradigm \cite{xin2020general, kairouz2021advances}, as a prominent method of handling possibly distributed large-scale modern machine learning models and datasets. Significant research efforts to address this formulation have focused on decentralized gradient methods \cite{tsitsiklis1986distributed}, including decentralized (sub)gradient descent \cite{sundhar2010distributed, koloskova2020unified}, accelerated gradient methods \cite{jakovetic2014fast}, variance reduction schemes \cite{shi2015extra}, and gradient tracking-based methods \cite{di2016next, pu2021distributed}, among others. Most prior art has addressed gradient noise with bounded variance, covering some heavy-tailed distributions such as log-normal and Weibull; however, there has been limited investigation into heavy-tailed gradient noise without the bounded variance assumption. It is well known that classical centralized and distributed stochastic optimization procedures are likely to suffer from instability and divergence in the presence of heavy-tailed stochastic gradients \cite{zhang2020adaptive, jakovetic2023nonlinear, al2016robust}. Further, in decentralized setups, heavy-tailed noise can cause stochastic gradients to vary significantly in magnitude, posing implementation challenges such as directly communicating stochastic gradients with sufficient precision \cite{jakovetic2023distributed, vukovic2024nonlinear}. To mitigate these issues, distributed gradient clipping based approaches, that use appropriately clipped stochastic gradients at local nodes to combat heavy-tailed noise, have been proposed in recent works \cite{yang2022taming, gorbunovhigh, sun2024distributed}. In the server-client setup: the work \cite{yang2022taming} assumes that the stochastic gradients have uniformly bounded $\alpha$ moments for some $\alpha \in (1, 2]$ to show MSE convergence, which essentially imposes the restrictive conditions of uniformly bounded gradients and noise distributions possessing $\alpha$ moments for some $\alpha > 1$; another concurrent work \cite{gorbunovhigh} relaxes the bounded gradient condition but still assumes that gradient noise has bounded $\alpha$ moment for $\alpha \in (1, 2]$, in establishing  high-probability convergence. In the more general decentralized setup, \cite{sun2024distributed} also addresses gradient noise with $\alpha$-moment for $\alpha \in (1, 2]$ in proving almost sure (a.s.) convergence, but additionally assumes a compact domain which also implies bounded gradients. Our work considers the more general decentralized setup, which, under a fully connected network, subsumes the server-client setup as a special case with the same algorithmic steps. Notably, by assuming noise symmetry, we relax the bounded gradient assumption (that enables us to deal with scenarios such as strongly convex costs) and, instead of requiring a uniform $\alpha \in (1, 2]$ moment bound on the nodes' stochastic gradients, we only assume that the gradient noise have bounded first absolute moments, i.e., $\alpha = 1$, to establish MSE convergence.  

The study of heavy-tailed gradient noise is motivated by empirical evidence arising in the training of deep learning models \cite{csimcsekli2019heavy, simsekli2019tail, zhang2020adaptive}. For example, the distribution of the gradient noise arising in the training of attention models resembles a Levy $\alpha$-stable distribution \cite{nair2022fundamentals} that has unbounded variance \cite{gurbuzbalaban2021heavy}. In addition, there is evidence that heavy-tailed noise can be induced by distributed machine learning systems with heterogeneous datasets that are not independently and identically distributed (i.i.d.) \cite{yang2022taming}. Recently, there have been significant advances in developing algorithms and analyses for non-distributed stochastic optimization under certain variants of heavy-tailed noise. The work \cite{zhang2020adaptive} analyzes clipped stochastic gradient descent (SGD) and establishes convergence rates in the mean-squared sense under gradient noise with bounded $\alpha$-moment, $\alpha \in (1, 2]$, and \cite{csimcsekli2019heavy} shows mean-squared convergence of vanilla SGD under heavy-tailed $\alpha$-stable distribution. More related to this work, in \cite{jakovetic2023nonlinear}, the authors also assume symmetric heavy-tailed gradient noise that has bounded first absolute moment (and possibly no moment of order strictly greater than one), and introduce a general analytical framework for \textit{nonlinear} SGD which subsumes popular choices such normalized gradient descent, clipped gradient descent, quantized gradient, etc., proving MSE, a.s.\ convergence and asymptotic normality. This work, focusing on decentralized setups and MSE convergence analysis, considers similar heavy-tailed noise in \cite{jakovetic2023nonlinear}, i.e., \textit{symmetric} noise with $\alpha$-moment bound only for \textit{$\alpha = 1$}. Note that, utilizing tools such as a generalized centralized limit theorem, several studies have shown that \textit{symmetric} heavy-tailed gradient noise, such as symmetric $\alpha$-stable distributions, appropriately characterizes noise models in various neural network training processes \cite{simsekli2019tail, peluchetti2020stable, gurbuzbalaban2021heavy, barsbey2021heavy}, which are also supported by empirical experiments in an array of neural network architectures and datasets \cite{barsbey2021heavy, battash2024revisiting}. In terms of error analysis, there is also a growing interest in deriving high-probability rates that \textit{logarithmically} \footnote{With in-expectation convergence guarantees, one can use Markov's inequality to show high-probability rates with confidence level $\beta$ that have inverse-power dependence $\cO(1/\beta)$.} depend on the confidence level under heavy-tailed gradients for clipped SGD type methods in non-distributed setups \cite{nazin2019algorithms, puchkin2023breaking, davis2021low, gorbunov2020stochastic, gorbunov2021near, zhang2022parameter, cutkosky2021high, sadiev2023high}, and in server-client setups \cite{gorbunovhigh}. We defer more discussions on high-probability convergence analyses to Section \ref{sec:otherworks}.

While gradient clipping and related techniques have been extensively studied in various non-distributed scenarios besides heavy-tailed noise, including differential privacy \cite{Dwork2006CalibratingNT}, $(L_0, L_1)$-smooth functions \cite{Zhang2020Why, koloskova2023revisiting}, etc., extending these analyses to \textit{distributed} setups is highly non-trivial and thus the existing analyses of \textit{distributed} gradient clipping are under relatively strong assumptions. In \cite{koloskova2023revisiting}, the authors show that under gradient noise with bounded variance, gradient clipping with constant clipping threshold incurs unavoidable bias \footnote{By bias, we mean the difference between the expected clipped gradients and the true gradients.} resulting in convergence to non-stationary points, which extends to the differential private SGD where clipped i.i.d. stochastic gradients are summed up before used for model updates. In this work, we use \textit{decaying} clipping thresholds to circumvent this issue. In the distributed differential privacy setup with \textit{heterogeneous} functions: the authors of \cite{zhang2022understanding} prove  convergence of the distributed gradient clipping in the server-client setup, by assuming \textit{bounded} stochastic gradient and uniformly bounded gradient dissimilarity, i.e., $\|\nabla f_i(\x) - \nabla f(\x) \| \le c, c > 0,  \forall i \in [n], \forall \x$; and the authors of \cite{li2023convergence} establish convergence in the decentralized setup by assuming $\| \nabla f_i(\x) - \nabla f(\x) \| \le (1/12) \|\nabla f(\x)\|, \forall i \in [n], \forall \x$. In adversarial-robust decentralized optimization where gradient clipping is adopted to combat adversarial noise, the authors of \cite{yu2023secure} establish convergence of decentralized gradient clipping in a special case where all local convex functions share the same minimizer and $\sum_{i=1}^n f_i$ is strongly convex. To handle $(L_0, L_1)$ smooth functions using distributed gradient clipping in server-client setups, the work \cite{crawshaw2024federated} assumes that $\| \nabla f_i(\x) \| \le a + b \| \nabla f(\x)\|, a \ge 0, b \ge 1,  \forall i \in [n], \forall \x$, and almost surely \textit{bounded} gradient noise. In contrast, this work considers a general decentralized setup, and assumes \textit{bounded gradient heterogeneity only at the global optimum}, i.e., let $ \x^* = \argmin_{\x \in \R^d} f(\x)$ we assume $\| \nabla f_i(\x^*) \|_\infty \le c_* $ \footnote{Note that such $c_*$ naturally exists given the existence of an optimum.} for each $i \in [n]$ and $c_* > 0$, which is strictly weaker than all aforementioned heterogeneity assumptions, and does not require gradient or gradient noise to be bounded. The relaxation of the above assumptions is made possible by our proposed algorithmic approach with error-feedback mechanism, i.e., applying a smoothed clipping operator on the estimating errors between local stochastic gradients and a momentum-type local gradient estimator. Similar algorithmic constructions with error-feedback are also used in recent distributed clipping works including \cite{khirirat2023clip21} in deterministic setting, and \cite{gorbunovhigh} under heavy-tailed gradient noise, both in sever-client setups. 
 

\begin{table*}[t] 
  \centering
  \begin{threeparttable} 
    \begin{tabular}{ccccccc}
      \toprule
  Methods & SC? & D? & UG? & SM? & $\alpha$ & MSE rates \\
  \midrule
  GClip \cite{zhang2020adaptive} &  \xmark & \xmark & \xmark & \xmark & $\alpha > 1$ & $\cO(1/t^{2- 2/\alpha })$ \\
  Dist-GClip \cite{yang2022taming} & \cmark & \xmark & \xmark & \xmark &  $\alpha > 1$ & $\cO(1/t^{2- 2/\alpha })$ \\ 
  \cite{sun2024distributed} & \cmark & \cmark & \xmark & \xmark & $\alpha > 1$ & Inapplicable\tnote{1} \\
    Ours & \cmark & \cmark & \cmark & \cmark &  $\alpha \ge 1$ & $\cO(1/t^{\min(c_s, \ 2/5)})$ \tnote{2} \\
  \bottomrule
    \end{tabular}
    \begin{tablenotes}
    {\footnotesize
        \item[1] The work \cite{sun2024distributed} proves  a.s.\ convergence without rate. 
      \item[2] $c_s$ is a constant independent of $\alpha$.
      }
    \end{tablenotes}
  \end{threeparttable} 
  \caption{A comparison on the assumptions and MSE rates for gradient methods with clipping for strongly convex functions. Column ``SC?" shows whether the method is for server-client distributed setup, ``D?" shows whether the method is for decentralized network setups,  ``SM'' means whether noise is symmetric, ``UG" shows whether the analysis works under unbounded gradient, and column $\alpha$ shows the allowed values of $\alpha$ for the noise assumption $\E\| \bxi \|^\alpha < \infty$.}
  \label{tab:ass_rates}
\end{table*}

\subsection{Contributions} 
Our principal contributions are delineated as follows: 


\begin{enumerate}
    \item We establish the first MSE convergence rate in heterogeneous \textit{decentralized} optimization under heavy-tailed gradient noise. Specifically, we address the case where local functions are strongly convex and smooth, subject to heavy-tailed gradient noise that is component-wise \textit{symmetric about zero} and has \textit{first absolute moment}. Whereas, in server-client setups, the most pertinent work \cite{yang2022taming}  assumes that for some $\alpha \in (1, 2], \ c > 0$, $\E \| \nabla f(\x) + \bxi \|^{\alpha} \le c$ uniformly in $\x$, where $\nabla f(\x)$ denotes gradient and $\bxi$ denotes noise that is \textit{zero-mean}, and shows $\cO\big(1/t^{2 - 2/\alpha}\big)$ MSE convergence rate. Apart from the restricted \textit{bounded gradient condition,} the MSE rate $\cO\big(1/ t^{2 - 2/\alpha}\big)$ derived in \cite{yang2022taming} approaches $\cO(1)$ when $\alpha$ approaches 1, rendering it inapplicable in our considered scenario in which no moment greater than $\alpha=1$ is required to exist. In general decentralized setups, the most relevant work \cite{sun2024distributed}, assuming $\E \| \bxi \|^\alpha \le c$ for $\alpha \in (1, 2]$, shows a.s.\ convergence without rate under a \textit{bounded domain} condition, which our work does not require. Our work addresses these limitations with distinct algorithmic constructions and analyses in general \textit{decentralized} setups. We establishes an MSE rate $\cO(1/t^{\delta})$ for any positive $\delta < \min(c_s, 2/5)$ where $c_s$ is independent of the existence of $\alpha$ moments with $\alpha >1$, and is lower bounded by some constant dependent on the condition number, as specified in Theorem \ref{thm:bdd_alpha}. We summarize these  comparisons of assumptions and rates in Table \ref{tab:ass_rates}.
    
    

    \item To tackle the inherent bias arising from gradient clipping in the \textit{stochastic} and \textit{heterogeneous} decentralized case, we adopt the algorithmic approach that integrates smoothed gradient clipping and error feedback, i.e., clipping the gradient estimation errors rather than stochastic gradients. While the authors of \cite{khirirat2023clip21} and \cite{gorbunovhigh} also develop variants of error feedback schemes for distributed gradient clipping in heterogeneous server-client setups, addressing deterministic gradient and stochastic gradient with heavy-tailed noise, respectively, this work considers the broader \textit{decentralized} network setups. Moreover, we carefully craft accompanying \textit{smooth} clipping operators and \textit{weighted} error feedback to mitigate the effect of symmetric heavy-tailed noise. This specific design leads to substantial differences in analysis with respect to \cite{khirirat2023clip21} and \cite{gorbunovhigh}, allowing for the relaxation of the bounded $\alpha$-moment assumption in \cite{gorbunovhigh} to the case where no moment greater than one is required to exist, and constitutes a distinct technical  contribution.

    
\end{enumerate} 

\subsection{Related work}
\label{sec:otherworks}
\textit{High-probability convergence in non-distributed setups}. For the case where gradient noise may have unbounded variance, in \cite{zhang2022parameter, cutkosky2021high}, the authors establish high-probability rates for convex and nonconvex functions assuming bounded gradients. Recently, the authors of \cite{sadiev2023high} and \cite{nguyen2023improved}, relax the restrictive ``bounded gradient'' assumption and prove high-probability rates with optimal dependence on $\alpha \in (1, 2]$, for strongly convex and non-convex smooth functions, respectively. In \cite{puchkin2023breaking}, leveraging additional structures in gradient noise, the authors propose a nonlinear operator coined smoothed medians of means, which requires multiple stochastic gradient samples at a single point, and manage to derive high-probability rates independent of noise moment $\alpha$. In \cite{armacki2024nonlinear}, by assuming symmetric gradient noise with positive probability mass around zero, the authors establish high-probability convergence for a broad class of nonlinear operators, including clipping and normalization, without imposing any moment bound on gradient noise. 



\subsection{Paper organization}
In Section \ref{sec:model}, we introduce our problem model and assumptions. In Section \ref{sec:alg}, we develop our algorithms and present the main results in Section \ref{sec:results} with convergence  analysis in Section \ref{sec:proof}. Section \ref{sec:experiments} demonstrates the effectiveness of our algorithm by a strongly convex example with synthetic data. Section \ref{sec:con} concludes our paper with some discussions on limitations and directions for future research. 



\subsection{Notations}
We denote by $[n]$ the set $\{1, 2, \ldots, n\}$; by $\R$ and $\R_{+}$, respectively, the set of real numbers and nonnegative real numbers, and by $\R^d$ the $d$-dimensional Euclidean space. We use lowercase normal letters for scalars, lowercase boldface letter for vectors, and uppercase boldface letter for matrices. Further, we denote by $x_i$ the $i$th component of $\x$; by  $|\x|$ the vector $[|x_1|, \ldots, |x_d|]^\top$ for $\x \in \R^d$; by $\diag(\x)$ the diagonal matrix whose diagonal is the argument vector $\x$; by $\one $ and $\zero$ the all-ones and all-zeros vector, respectively; and by $\bI$ the identity matrix. Next, we let  $\| \x \|$ and $\| \x \|_\infty$ denote the $\ell_2$ norm and infinity norm of vector $\x$, respectively; and $\| \bX \|_2$ denote the operator norm of $\bX$. For functions $p(t), q(t)$ in $t$, we have $p(t) = \cO(q(t))$ if $\limsup_{t \rightarrow \infty} p(t)/q(t) < \infty$, $p(t) = \Theta(q(t))$ if there exist positive $c_1, c_2, t_0$ such that $\forall t \ge t_0, 0 \le c_1 q(t) \le p(t) \le c_2 q(t)$, and $p(t) = \Omega(q(t))$ if there exist positive $c, t_0$ such that $\forall t \ge t_0, p(t) \ge cq(t) \ge 0$; for event $A$, the indicator function $\mathds{1}_A = 1$ if event $A$ happens, otherwise $0$; and we use $\E$ for expectation over random events. Finally, any inequality between two vectors, or between a vector and a scalar holds true on a component-wise basis; an inequality between random variables and scalars holds true almost surely; an inequality $\bA \succeq \bB$ holds if $\bA - \bB$ is positive semidefinite.  
\section{Problem Model} 
\label{sec:model}
Consider a network of $n$ nodes where each node $i$ holds a local and private function $f_i$. 

\begin{assumption} 
\label{as:func}
For each node $i \in [n]$, the local objective function $f_i: \R^d \rightarrow (-\infty, \infty)$ is twice-continuously differentiable, $\mu$-strongly convex, and $L$-smooth. That is, $\forall i \in [n], \ \forall \x \in \R^d$, the Hessian matrix of $f_i$ at $\x$ satisfies $\mu \bI \preceq \nabla^2 f_i(\x) \preceq L \bI$ for some constants $L \ge \mu > 0$. 
\end{assumption}



The following assumption measures the heterogeneity among local functions and is naturally satisfied for any finite $n$. 
\begin{assumption}
\label{as:bdd_hete} 
Let $\x^* := \argmin_{\x \in \R^d} f(\x)$. For any node $i \in [n]$, the gradient of local function $f_i$ at the global minimizer $\x^*$, satisfies $\| \nabla f_i(\x^*) \|_\infty \le c_*$. 
\end{assumption}

Let the stochastic gradient for $f_i$ at query $\x_i^t$ be $g_i(\x_i^t)$ such that $g_i(\x_i^t) = \nabla f_i(\x_i^t) + \bxi_i^t$,  where $\bxi_i^t$ denotes the stochastic gradient noise at $\x_i^t$. 
\begin{assumption}
\label{as:ht-noise} 
The sequence of gradient noise vectors  $\{ \{\bxi_i^t\}_{i \in [n]} \}_{t \ge 0}$ are independently distributed over all iterations $t \ge 0$. For any $t \ge 0, i \in [n]$, we assume that every component of the random vector $\bxi_i^t \in \R^d$ has the identical marginal probability density function (pdf), denoted as $p: \R \rightarrow \R_+$, that satisfies the following:  
\begin{enumerate}
    \item $p$ is symmetric about zero, i.e.,  $p(u) = p(-u), \forall u \in \R$.

    \item \label{subas:1} $p$ has bounded first absolute moment, i.e., there exists some constant $\sigma > 0$ such that $\int_{-\infty}^{\infty} |u| p(u) du \le \sigma$.

    
\end{enumerate}
\end{assumption} 

\begin{remark}
The conditions in Assumption \ref{as:ht-noise} ensure the existence of a joint pdf \(\hat{p}: \R^d \rightarrow \R_+\) for any \(\bxi_i^t\). This can be relaxed to allow $\hat{p}$ to have \textit{non-identical} marginal pdfs, although we assume they are equal for notational simplicity. 
\end{remark}
 
Example distributions that satisfy Assumption \ref{as:ht-noise} include light-tailed zero-mean Gaussian distribution, zero-mean Laplace distribution, and also heavy-tailed zero-mean symmetric $\alpha-$stable distribution \cite{bercovici1999stable}. 

\begin{example}
\label{exm:dist}
For a noise distribution satisfying Assumption \ref{as:ht-noise} but not the assumptions used in \cite{yang2022taming, sadiev2023high}, consider the following pdf (see Figure \ref{fig:ht-example} for plot for $\ln \big(\P(u > x)\big)$ and its comparison with a Gaussian with the same value of $\int_{-\infty}^\infty |u| p(u) du$.),
\begin{align} 
\label{eq:ht-example}
    p(u) = \frac{c_p}{(u^2 + 2) \ln^2(u^2 + 2)}, 
\end{align} 
where $c_p := 1/\int_{-\infty}^{\infty} 1/[(u^2 + 2) \ln^2(u^2 + 2)] du$. We have the following proposition with proofs deferred to Appendix \ref{sec:example}. 
\end{example}

\begin{figure}[htbp]
    \centering
    \begin{subfigure}[b]{0.45\textwidth}
        \centering \includegraphics[width=\textwidth]{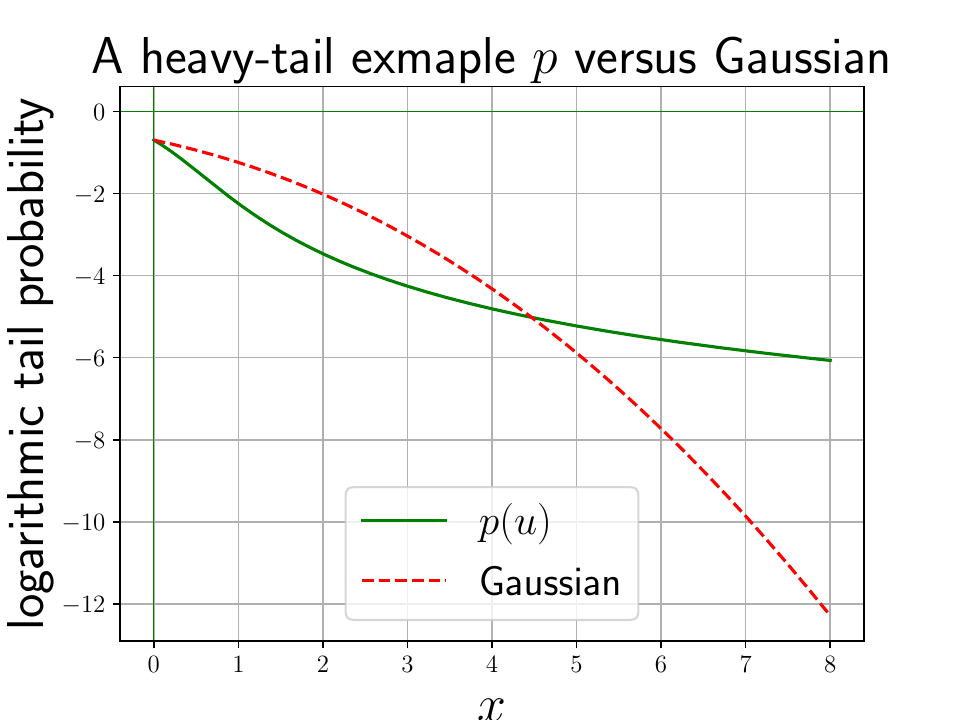}
        \caption{An example of a heavy-tailed distribution.}
        \label{fig:ht-example}
    \end{subfigure}
    \hfill
    \begin{subfigure}[b]{0.45\textwidth}
        \centering
        \includegraphics[width=\textwidth] {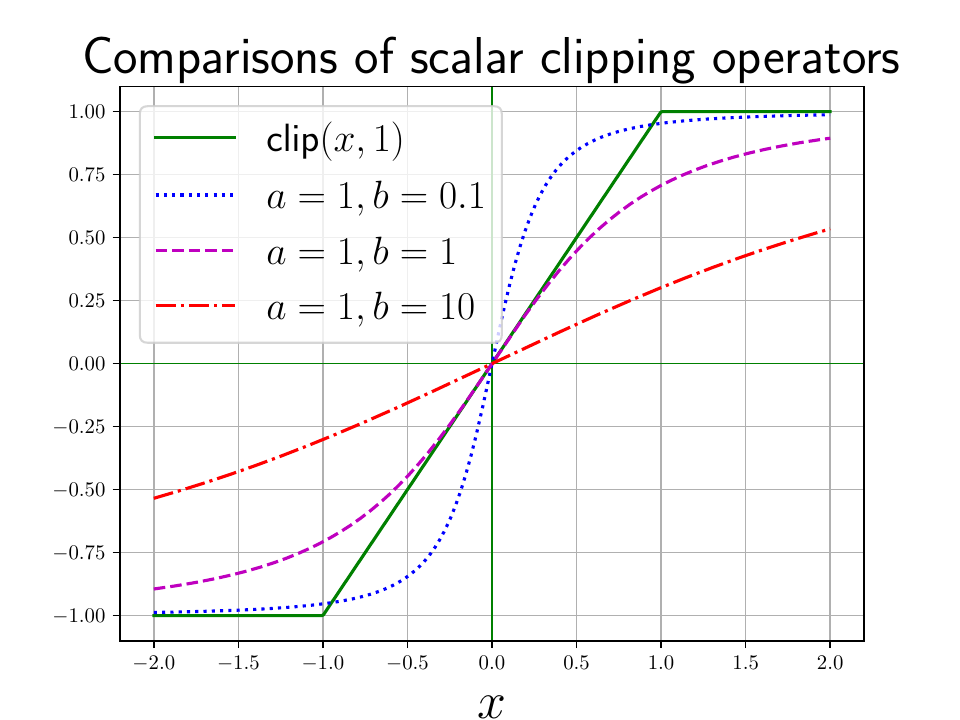}
        \caption{An illustration of $\Psi(x) = \frac{ax}{\sqrt{x^2 + b}}$}  
    \label{fig:sc-operator}
    \end{subfigure}
\end{figure}

\begin{proposition} 
\label{prop:example}
    The distribution pdf in Example \ref{exm:dist} is heavy-tailed, has bounded first absolute moment, but does not have any moment greater than one.  
\end{proposition}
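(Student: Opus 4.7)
The plan is to verify the three stated properties directly from the pdf formula $p(u) = c_p / \big[(u^2+2)\ln^2(u^2+2)\big]$ by combining (i) asymptotic tail bounds and (ii) two convenient substitutions, $v = u^2+2$ followed by $w = \ln v$, which turn every relevant integral into a pure power integral in $w$. First I would record the asymptotic $p(u) \sim c_p/(4u^2 \ln^2 u)$ as $|u|\to\infty$ and note that this makes $p$ a valid pdf (since $\int^\infty du/(u^2\ln^2 u) <\infty$), so that a normalizing constant $c_p \in (0,\infty)$ exists.

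For the heavy-tailed property, I would show that for large $x$,
\[
\P(X>x) \;=\; \int_x^\infty p(u)\,du \;\ge\; \frac{c}{x\ln^2 x}
\]
for some constant $c>0$, by bounding $p(u)$ from below by a constant multiple of $1/(u^2 \ln^2 u)$ on $[x,\infty)$ and using $\int_x^\infty du/(u^2\ln^2 u) \ge 1/(2x\ln^2(2x))$. Since $e^{ax}$ grows exponentially while $x\ln^2 x$ is only polylogarithmic, this immediately gives $\limsup_{x\to\infty}\P(X>x)e^{ax} = \infty$ for every $a>0$, matching the definition of heavy-tailed used in the paper's footnote.

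For the bounded first absolute moment, I would compute directly using the substitution $v = u^2 + 2$, $dv = 2u\,du$, to get
\[
\int_{-\infty}^\infty |u|\,p(u)\,du \;=\; c_p \int_2^\infty \frac{dv}{v\ln^2 v},
\]
and then $w=\ln v$ reduces this to $c_p \int_{\ln 2}^\infty dw/w^2 = c_p/\ln 2 < \infty$, giving an explicit bound $\sigma = c_p/\ln 2$.

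For the non-existence of moments of order $\alpha>1$, I would reduce $\int|u|^\alpha p(u)\,du$ via the tail asymptotic to showing that
\[
\int_{2}^\infty \frac{u^{\alpha-2}}{\ln^2 u}\,du \;=\; \infty.
\]
For $\alpha \ge 2$ this is immediate since the integrand is at least $1/\ln^2 u$ which has divergent integral. For $1<\alpha<2$, setting $\beta = 2-\alpha \in (0,1)$, I would pick $\epsilon \in (0,1-\beta)$ and note that $u^\epsilon/\ln^2 u \to\infty$, so eventually $u^{-\beta}/\ln^2 u \ge u^{-\beta-\epsilon}$; since $\beta+\epsilon<1$, the minorant has divergent integral. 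The main subtlety (and essentially the only non-routine point) is precisely this borderline case $1<\alpha<2$, where the logarithmic factor $1/\ln^2 u$ might look like it could rescue integrability but is defeated by choosing an arbitrarily small polynomial slack $\epsilon$.
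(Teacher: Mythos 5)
Your proof is correct, and it departs from the paper's in one substantive place. Part II (bounded first absolute moment) is identical to the paper's: the substitution $v=u^2+2$, $w=\ln v$ gives exactly $c_p/\ln 2$. The real difference is Part I. The paper also substitutes $w=\ln(u^2+2)$, lower-bounds the tail by $\int \frac{c_p}{2w^2 e^{w/2}}\,dw$, and then invokes a sharp external inequality for the incomplete gamma function $\Gamma(0,a/2)$ (Pinelis) plus a Newton--Mercator series estimate to arrive at $\P(u>x)\ge \frac{c_p}{4}(x^2+2)^{-5/2}$ for large $x$. Your direct comparison $p(u)\gtrsim 1/(u^2\ln^2 u)$ followed by restricting the tail integral to $[x,2x]$ yields the sharper bound $\P(u>x)\gtrsim 1/(x\ln^2 x)$ with no machinery at all; either polynomial-type lower bound defeats $e^{-ax}$, so both suffice. (Minor point: restricting to $[x,2x]$ gives $1/(4x\ln^2(2x))$ rather than your stated $1/(2x\ln^2(2x))$; the constant is immaterial.) For Part III the paper stays in the $w$ variable and uses $(e^w-2)^{(\alpha-1)/2}\ge (e^w/2)^{(\alpha-1)/2}$ together with $e^{w(\alpha-1)/2}\ge 1+w(\alpha-1)/2$ to reduce to $\int dw/w=\infty$, whereas you stay in $u$ and absorb the $\ln^{-2}u$ factor into an arbitrarily small polynomial slack $u^{-\epsilon}$; these are interchangeable elementary devices, and you correctly isolate $1<\alpha<2$ as the only nontrivial case. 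Net effect: your route is more elementary and self-contained, trading the paper's slightly stronger intermediate tail estimate machinery for a cleaner and in fact tighter tail bound.
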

\section{Algorithm development}
\label{sec:alg}
In the non-distributed setup, several works including \cite{zhang2020adaptive, koloskova2023revisiting, sadiev2023high, jakovetic2023nonlinear} have analyzed the clipped SGD algorithm,
\begin{align}
\label{eq:clipsgd}
\x^{t+1} = \x^t - \eta_t \clip_\lambda ( g(\x^t)),
\end{align}
where $g(\x^t)$ is the stochastic gradient at $\x^t$ and $\eta_t$ is the stepsize. The clipping operator is defined as $\clip_\lambda (\y) = \min(\lambda/|\y|, 1) \y, \forall \y \in \R^d$, or component-wise, $[\clip_\lambda (\y)]_i = \min(\lambda, [\y]_i)$ $\forall i = 1, \ldots, d$, and the corresponding update \eqref{eq:clipsgd} is denoted as Global Clipping (GClip) and Component-wise Clipping (CClip) in \cite{zhang2020adaptive}, respectively. A natural extension in the server-client distributed setup is for each node to clip the local stochastic gradient before sending it to the server, resulting in the following update:
\begin{align}
\label{eq:clippedsgd_client}
    \x^{t+1} = \x^t - \frac{\eta_t}{n} \sum_{i=1}^n \clip(g_i(\x^t), \lambda), 
\end{align} 
which we designate as Dist-GClip or Dist-CClip, depending on the choice of clipping operator. Similarly, in the decentralized setup with mixing matrix $\{w_{ij}\}$, one has 
\begin{align}
\label{eq:network_clippings}
    \x_i^{t+1} & = \sum_{j=1}^n w_{ij} \big(\x_i^t - \eta_t \clip( g_j(\x_j^t), \lambda)\big),
\end{align}
and we refer \eqref{eq:network_clippings} as Network-GClip or Network-CClip based on the selected clipping method. However, for any constant $\lambda > 0$, the above update \eqref{eq:clippedsgd_client}  can lead to convergence to non-stationary points due to stochastic bias \cite{koloskova2023revisiting} and node heterogeneity \cite{zhang2022understanding}. 

In the server-client distributed setup, we address this issue by proposing an algorithm that utilizes a Smooth Clipping operator with decaying clipping threshold and a weighted Error Feedback (SClip-EF). We use clipping to combat heavy-tailed stochastic gradient noise as in the non-distributed setups.  The main idea is to apply a \textit{smooth} clipping operator on some estimation errors of local gradients, rather than directly to the local stochastic gradients themselves, followed by the utilization of an \textit{error feedback} mechanism to update the local gradient estimator. We design a time-varying smooth clipping operator $\Psi_t$ (See Figure \ref{fig:sc-operator} for an illustration), parameterized by $\varphi_t, \epsilon_t > 0$, that transforms any scalar input $y \in \R$ as
\begin{align}
     \Psi_t(y) & = \frac{y \varphi_t }{\sqrt{y^2 + \epsilon_t}}. \label{eq:psit} 
\end{align}
Applying $\Psi_t$ on each component of $\y = [y_1, \ldots, y_d]^\top \in \R^d$, we have the component-wise clipping operator on vector $\y$, $\bPsi_t(\y) = [\Psi_t(y_1), \ldots, \Psi_t(y_d)]^\top$.
Then, we use $\m_i^t$ to track the local stochastic gradient and $\m_i^t$ follows from a \textit{weighted error feedback} with \textit{smoothed clipping}, with some weight $0 < \beta_t < 1$, 
\begin{align}
    \m_i^{t+1} & = \beta_t\m_i^t + (1-\beta_t) \bPsi_t(g_i(\x^t) - \m_i^t). \label{eq:mit_update} 
\end{align}
Then, the server computes $\x^{t + 1}$ by combining all local updates, with step size $\eta_t > 0$, 
\begin{align} 
    \x^{t+1} & = \x^t - \frac{\eta_t}{n} \sum_{i=1}^n  \m_i^{t+1}. \label{eq:main_update} 
\end{align}
We refer to the procedures \eqref{eq:psit}-\eqref{eq:main_update} as $\sce$, and $\sce$ is parameterized by smooth clipping parameters $\varphi_t, \epsilon_t$, error-feedback coefficient $\beta_t$, and step size $\eta_t$. We summarize $\sce$ in Algorithm \ref{alg:sce}, and its direct extension $\sen$ in decentralized network setups in Algorithm \ref{alg:sen}.


\begin{algorithm}[t]
\begin{algorithmic}[1] 
\caption{SClip-EF}
\Require $\beta_t, \eta_t, \epsilon_t, \varphi_t, \x^0$, oracle function $g_i$. 
\State $\m_i^0 \leftarrow \zero, \forall  i = 1, \ldots, n$
 \For{$t = 0,\ldots, T-1$}
    \For{node $i = 1, \ldots, n$ in parallel}
    \State $\m_i^{t+1} \leftarrow \beta_t\m_i^t + (1-\beta_t) \Psi_t(g_i(\x^t) - \m_i^t),$ 
    \EndFor
    \State $\x^{t+1} \leftarrow \x^t - \frac{\eta_t}{n} \sum_{i=1}^n  \m_i^{t+1}$
\EndFor
\State \Return $\x^T$
\label{alg:sce}
\end{algorithmic}
\end{algorithm}

\begin{algorithm}[t]
\begin{algorithmic}[1] 
\caption{$\sen$}
\Require $\beta_t, \eta_t, \varphi_t, \epsilon_t, \x_i^0 = \x_j^0, \ \forall i \ne j$. 
\State $\m_i^0 \leftarrow \zero, \forall  i = 1, \ldots, n$
 \For{$t = 0,\ldots, T-1$}
    \For{node $i = 1, \ldots, n$ in parallel} 
    \State $\m_i^{t+1} \leftarrow \beta_t \m_i^t + (1 - \beta_t) \bPsi_t (g_i(\x_i^t) - \m_i^t) \big)$ 
    \State Send $\x_i^t - \eta_t \m_i^{t+1}$ to all neighbors of agent $i$
    \State $\x^{t+1}_i \leftarrow \sum_{j = 1}^n w_{ij} ( \x_j^t - \eta_t \m_j^{t+1})$
    \EndFor 
\EndFor
\State \Return $\{\x_i^T\}$
\label{alg:sen}
\end{algorithmic}
\end{algorithm}

\section{Main results} 
\label{sec:results}
We present our main results in the decentralized network setup, in which the choice of non-negative $\bW = \{ w_{ij} \} \in \R^{n \times n}$ is crucial for consensus. We assume that the communication network admits a mixing matrix $\bW$ such that the following condition holds. 
\begin{assumption} 
\label{as:mixmatrix}
The network $G = (\cV, \cE)$ is directed and strongly connected. The nonnegative weight matrix $\bW \in \R_{+}^{n \times n}$ associated with the communication network in use satisfies that its components $w_{ij} \neq 0$ only if there is a communication link between agent $i$ and $j$, or $i=j$. Further, $\bW$ is doubly stochastic, i.e., $\bW \one = \one$ and $\one^\top \bW = \one^\top$. The preceding conditions guarantee that $\|\bW -  \one \one^\top/n \|_2 := \lambda \in [0, 1)$.  
\end{assumption}

Assumption \ref{as:mixmatrix} is applicable not only to undirected networks but also to the broader class of strongly connected directed networks that possess doubly stochastic weights. This assumption ensures that the second-largest singular value $\lambda$ of matrix $\bW$ remains less than 1, expressed as $\lambda := \|\bW -  \one \one^\top/n \|_2 < 1$ \cite{horn2012matrix}.

\begin{theorem} 
\label{thm:bdd_alpha} 
Suppose Assumptions \ref{as:func}, \ref{as:bdd_hete}, \ref{as:ht-noise}, \ref{as:mixmatrix} hold. Let $\kappa := L/\mu$. Take clipping parameters $\varphi_t, \epsilon_t$, error feedback wight $\beta_t$, and step size $\eta_t$ as follows: 
\begin{align}
\label{eq:sce-hypara}
    \varphi_t = \frac{\cp}{\sqrt{t + 1}}, \ \epsilon_t = \tau (t + 1)^{3/5}, \ \beta_t = \frac{c_\beta}{ \sqrt{t + 1}}, \ \eta_t = \frac{c_\eta}{(t + 1)^{1/5}},  
\end{align}
where constants $\cp, \tau, c_\beta, c_\eta$ satisfy that $c_\eta^2 \cp^2 = \Theta(\sigma^2/\mu^2)$, $0 \le c_\beta < 1$, and $\tau = \Theta(d \sigma^2 \kappa^{5/2})$.
Then, there exists some constant $c_s = \Omega \big(d^{-1/2 } \kappa^{-5/4 }\big)$ such that, for each $i \in [n]$, the iterates $\{ \x_i^t \}$ generated by Algorithm  \ref{alg:sen} satisfy that for any $0 < \delta < (c_s, 2/5)$, it holds that 
\begin{align}
\label{eq:thmcvg}
     \lim_{t \rightarrow \infty}  (t+1)^{ \delta } \E \big[ \| \x_i^t - \x^* \|^2 \big] = 0. 
\end{align}
\end{theorem}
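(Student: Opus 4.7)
The plan is to run a joint Lyapunov argument tracking three coupled quantities: the averaged-iterate optimality gap $U_t := \|\bar{\x}^t - \x^*\|^2$ with $\bar{\x}^t := (1/n)\sum_{i=1}^n \x_i^t$; the consensus error $C_t := \sum_{i=1}^n \|\x_i^t - \bar{\x}^t\|^2$; and a gradient-estimator tracking error $M_t := \sum_{i=1}^n \|\m_i^t - \nabla f_i(\x_i^t)\|^2$. Because $\bW$ is doubly stochastic, averaging the network update yields $\bar{\x}^{t+1} = \bar{\x}^t - \eta_t \bar{\m}^{t+1}$, which is the starting point for the $U_t$ recursion. Expanding the square and using $\mu$-strong convexity and $L$-smoothness of $f$, after decomposing $\bar{\m}^{t+1} = \nabla f(\bar{\x}^t) + \bigl((1/n)\sum_i \nabla f_i(\x_i^t) - \nabla f(\bar{\x}^t)\bigr) + \bigl((1/n)\sum_i(\m_i^{t+1}-\nabla f_i(\x_i^t))\bigr)$, one obtains
\begin{align*}
\E[U_{t+1}\mid \cF_t] \le (1-\mu\eta_t) U_t + O(\eta_t M_t/n) + O(\eta_t L^2 C_t/n) + O(\eta_t^2 d\varphi_t^2).
\end{align*}
For $C_t$, the spectral contraction $\|\bW - \one\one^\top/n\|_2 = \lambda < 1$ combined with the uniform componentwise bound $\|\bPsi_t(\cdot)\|_\infty \le \varphi_t$, which propagates into $\|\m_i^{t+1}\|$ via the error-feedback recursion, yields $\E[C_{t+1}\mid \cF_t] \le \lambda^2 C_t + O(\eta_t^2 n d\varphi_t^2)$ modulo lower-order coupling.

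The decisive ingredient is the recursion for $M_t$. The input to $\bPsi_t$ splits componentwise as $a+\xi$ with deterministic part $a = (\nabla f_i(\x_i^t)-\m_i^t)_j$ and symmetric heavy-tailed noise $\xi = (\bxi_i^t)_j$. Since $\Psi_t$ is odd and $p$ is symmetric about zero, I would write
\begin{align*}
\E[\Psi_t(a+\xi)] = \tfrac{1}{2}\int_{-\infty}^{\infty}\bigl[\Psi_t(a+u) + \Psi_t(a-u)\bigr] p(u)\, du,
\end{align*}
and a direct monotonicity computation on this symmetrized integrand shows that $\E[\Psi_t(a+\xi)]$ is odd and sign-preserving in $a$ with magnitude at least $c\,\varphi_t |a|/\sqrt{a^2+\epsilon_t+\Theta(\sigma^2)}$; crucially, only $\int |u|\, p(u)\,du \le \sigma$ from Assumption \ref{as:ht-noise} is invoked, so no moment beyond $\alpha=1$ is used. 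Combining this contractive estimate with the uniform bound on $\|\bPsi_t\|$ and with the drift bound $\|\nabla f_i(\x_i^{t+1})-\nabla f_i(\x_i^t)\|\le L\eta_t\|\m_j^{t+1}\|$ from $L$-smoothness then yields
\begin{align*}
\E[M_{t+1}\mid \cF_t] \le \Bigl(1 - \Theta\bigl((1-\beta_t)\varphi_t/\sqrt{\epsilon_t+\sigma^2}\bigr)\Bigr) M_t + O\bigl((1-\beta_t)^2 n d\varphi_t^2\bigr) + O(L^2 \eta_t^2 n d\varphi_t^2).
\end{align*}

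The main obstacle is matching these three recursions under the schedule \eqref{eq:sce-hypara} so that a weighted Lyapunov function $V_t = U_t + a_* C_t + b_* M_t$ closes into a single non-homogeneous inequality. With $\varphi_t = \Theta((t+1)^{-1/2})$ and $\epsilon_t = \Theta((t+1)^{3/5})$, the contraction rate on $M_t$ is $\Theta((t+1)^{-4/5})$ and that on $U_t$ is $\Theta((t+1)^{-1/5})$, while the additive clipping-bias drift is $\Theta((t+1)^{-7/5})$. The choice $\tau = \Theta(d\sigma^2\kappa^{5/2})$ is precisely what makes the constant inside the $M_t$ contraction uniformly positive and produces $c_s = \Omega(d^{-1/2}\kappa^{-5/4})$; the weights $a_*,b_*$ are then tuned in terms of $\kappa, \lambda, c_*$ so that the cross-coupling terms are absorbed. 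Proving
\begin{align*}
\E V_{t+1} \le \bigl(1 - \Theta((t+1)^{-1/5})\bigr) \E V_t + O\bigl((t+1)^{-1/5-\delta}\bigr)
\end{align*}
for every $0 < \delta < \min(c_s, 2/5)$ then yields \eqref{eq:thmcvg} by a standard non-homogeneous recursion lemma (in the spirit of Chung's lemma). The subtle and non-obvious point throughout is that the symmetry-based contraction on $M_t$ weakens as $|a|$ grows, so a priori control on $|(\m_i^t - \nabla f_i(\x_i^t))_j|$ relative to $\sqrt{\epsilon_t}$ is needed; the specific scaling $\epsilon_t \sim t^{3/5}$ is chosen exactly so that this tracking error remains of order $\sqrt{\epsilon_t}$ along the trajectory, preserving the $\Theta(\varphi_t/\sqrt{\epsilon_t})$ contraction rate and giving the advertised $2/5$ ceiling on the achievable exponent.
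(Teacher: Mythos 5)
Your proposal has a structural flaw that prevents the Lyapunov argument from closing: the quantity $M_t = \sum_{i}\|\m_i^t - \nabla f_i(\x_i^t)\|^2$ does not converge to zero under this algorithm, so it cannot serve as a contracting tracking error. Because $|\bPsi_t(\cdot)| \le \varphi_t$ and $\beta_t \to 0$, the estimator satisfies $|\m_i^{t+1}| \le \beta_t|\m_i^t| + (1-\beta_t)\varphi_t$, hence $\m_i^t \to \zero$ uniformly (this is Lemma \ref{lm:mt_bound_i}); meanwhile $\nabla f_i(\x_i^t) \to \nabla f_i(\x^*) \ne \zero$ for heterogeneous nodes, so $M_t \to \sum_i\|\nabla f_i(\x^*)\|^2 > 0$. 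Feeding a non-vanishing $M_t$ into your $U_t$ recursion through the $O(\eta_t M_t/n)$ term stalls $U_t$ at a constant. The paper avoids this by never asking $\m_i^t$ to track $\nabla f_i(\x_i^t)$: it linearizes the expected clipping as $\bPhi_t(\nabla f_i(\x_i^t)-\m_i^t) = \bH_{\Phi,i}^t(\nabla f_i(\x_i^t)-\m_i^t)$, splits off $\bH_{\Phi,i}^t\bH_{f,i}^t\e_x^t$ as the contraction, and shows that the residual heterogeneity term $\frac{1}{n}\sum_i \bH_{\Phi,i}^t(\m_i^t - \nabla f_i(\x^*))$ is higher order because the gains $\bH_{\Phi,i}^t$ concentrate around a common diagonal matrix $\bQ^t$, which lets the exact cancellation $\sum_i \nabla f_i(\x^*)=0$ kick in (Lemma \ref{lm:ezbound}). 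That cancellation --- not a tracking property of $\m_i^t$ --- is what handles heterogeneity, and your scheme has no substitute for it.

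Second, your contraction rate on $U_t$ is wrong, and this is why your proposal cannot explain the $\min(c_s,2/5)$ cap. The descent direction $\bar{\x}$-update uses $\bar{\m}^{t+1}$, which is not $\nabla f(\ox^t)$ up to vanishing error; it is the gradient-type signal attenuated by the clipping gain $\Theta(\varphi_t/\sqrt{\epsilon_t}) = \Theta((t+1)^{-4/5})$. The effective stepsize is therefore $\eta_t\cdot\Theta((t+1)^{-4/5}) = \Theta((t+1)^{-1})$, giving a contraction factor $1 - c_\eta c_1\mu/(t+1)$ with a constant that may be small --- this borderline $\Theta(1/t)$ contraction is exactly where $c_s = c_\eta c_1\mu$ and its lower bound $\Omega(d^{-1/2}\kappa^{-5/4})$ come from, and the role of $\tau = \Theta(d\sigma^2\kappa^{5/2})$ is to control the ratio $c_2/c_1$ so that $\bH_{\Phi,i}^t\bH_{f,i}^t + \bH_{f,i}^t\bH_{\Phi,i}^t$ remains positive definite (via the $\bA\bB+\bB\bA$ eigenvalue lemma), not to keep an $M_t$-contraction positive. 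With your claimed $1-\Theta((t+1)^{-1/5})$ contraction the rate would be limited only by the forcing terms and $c_s$ would not appear at all. Your symmetrization of $\E[\Psi_t(a+\xi)]$ and the observation that only the first absolute moment is needed are correct in spirit, but the surrounding architecture must be rebuilt along the paper's lines: deterministic bounds on $\m_i^t$, the consensus error, and $|\nabla f_i(\x_i^t)-\m_i^t| = O(t^{3/10})$ first, then the two-sided bound on $\bH_{\Phi,i}^t$, then a preliminary boundedness result for $\E\|\e_x^t\|^2$ before the refined Markov-truncation estimate of the heterogeneity term.
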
 
\begin{remark} 
The theorem above establishes an MSE convergence rate for the proposed decentralized method, Algorithm \ref{alg:sen}, under symmetric heavy-tailed noise with only a bounded first absolute moment. To the best of our knowledge, this is the first convergence rate for general decentralized optimization under heavy-tailed noise and the first result without assuming the bounded gradient condition \cite{sun2024distributed}. To ensure convergence, the algorithm parameters are configured as in \eqref{eq:para-setup}, based on certain analytical constants defined in the analysis. The specific parameter choices in the theorem ensure the lower bound of $c_s$. 
\end{remark}

\begin{corollary}
\label{cor:ser-cli}
If we take $\bW = (1/n)\one\one^\top$, and let $\x^t = \x_i^t, \forall i \in [n]$, then the update of $\x^t$ is equivalent to Algorithm \ref{alg:sce} in the server-client setup. Therefore, under Assumptions \ref{as:func}, \ref{as:bdd_hete}, \ref{as:ht-noise} and the same parameter conditions as in Theorem \ref{thm:bdd_alpha}, the iterates $\{\x^t\}$ from Algorithm \ref{alg:sce} also satisfy that for any $0 < \delta < (c_s, 2/5)$, there holds $\lim_{t \rightarrow \infty} (t + 1)^{\delta} \E[ \|\x^t - \x^*\|^2] = 0$. 
\end{corollary}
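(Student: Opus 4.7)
The plan is to reduce Algorithm \ref{alg:sce} to the special case of Algorithm \ref{alg:sen} obtained by selecting $\bW = (1/n)\one\one^\top$, and then invoke Theorem \ref{thm:bdd_alpha} directly. First I would verify that this choice of $\bW$ satisfies Assumption \ref{as:mixmatrix}: it is entry-wise nonnegative, corresponds to the complete graph (which is strongly connected), is doubly stochastic since $\bW \one = (1/n)\one\one^\top\one = \one$ and $\one^\top \bW = \one^\top$, and the spectral quantity $\lambda = \| \bW - \one\one^\top/n \|_2$ equals $0 < 1$. Thus all four assumptions invoked by Theorem \ref{thm:bdd_alpha} are in force.

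Next I would show by induction on $t$ that under this choice of $\bW$ and the common initialization $\x_i^0 = \x_j^0$, the iterates of Algorithm \ref{alg:sen} satisfy $\x_i^t = \x_j^t$ for every pair $i,j$ and every $t$. The base case is immediate from the initialization. For the inductive step, if all $\x_j^t$ are equal to a common vector $\x^t$, then the $\m$-update in Algorithm \ref{alg:sen} becomes
\begin{equation*}
\m_i^{t+1} = \beta_t \m_i^t + (1-\beta_t)\bPsi_t\bigl(g_i(\x^t) - \m_i^t\bigr),
\end{equation*}
which is exactly the $\m$-update in Algorithm \ref{alg:sce}. Plugging $w_{ij} = 1/n$ into the consensus step yields
\begin{equation*}
\x_i^{t+1} = \frac{1}{n}\sum_{j=1}^n \bigl(\x^t - \eta_t \m_j^{t+1}\bigr) = \x^t - \frac{\eta_t}{n}\sum_{j=1}^n \m_j^{t+1},
\end{equation*}
which is independent of $i$ and matches the server-side aggregation in Algorithm \ref{alg:sce}. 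Hence the common value $\x^{t+1}$ produced by Algorithm \ref{alg:sen} coincides with the iterate produced by Algorithm \ref{alg:sce}.

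With the two algorithms producing identical iterates and all hypotheses of Theorem \ref{thm:bdd_alpha} verified, applying Theorem \ref{thm:bdd_alpha} to any of the equal sequences $\{\x_i^t\}$ immediately gives $\lim_{t \to \infty}(t+1)^\delta \E[\|\x^t - \x^*\|^2] = 0$ for any $0 < \delta < \min(c_s, 2/5)$, as claimed. No technical obstacle is anticipated; the corollary is a direct specialization once the inductive consensus argument and the mixing-matrix check are in place.
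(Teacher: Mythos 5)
Your proposal is correct and follows exactly the route the paper intends: the corollary's own statement is essentially its proof (specialize $\bW = (1/n)\one\one^\top$, note the resulting algorithmic equivalence with Algorithm \ref{alg:sce}, and invoke Theorem \ref{thm:bdd_alpha}), and you simply make explicit the verification of Assumption \ref{as:mixmatrix} and the induction showing the iterates stay in consensus. No issues.
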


\begin{remark}
We again note that $c_s$ is independent of $\alpha$ in the $\alpha$-moment bound assumption made in the related work \cite{yang2022taming}, and the implied Corollary \ref{cor:ser-cli} in server-client setups also does not rely bounded domain or gradient, which we attribute to the power of symmetry in the noise. 
\end{remark}

\section{Convergence analysis}  
\label{sec:proof}
\subsection{Preliminaries} 
Define the network average as $\ox^t := (1/n)\sum_{i=1}^n \x_i^t$. Then, from line 6 in Algorithm \ref{alg:sen}, and the double stochasticity of $\bW$ in Assumption \ref{as:mixmatrix}, the iterative process of the network average $\ox^t$ is 
\begin{align}
\label{eq:x_avg_update}
    \ox^{t+1} = \ox^{t} - \frac{\eta_t}{n}\sum_{i=1}^n \m_i^{t+1}. 
\end{align} 
Define the stacked long vectors $ \x^t = 
    \begin{bmatrix}
    \x_1^t, 
    \ldots,
    \x_n^t
    \end{bmatrix}^\top
    , \quad 
    \m^t = 
    \begin{bmatrix}
    \m_1^t,
    \ldots, 
    \m_n^t
    \end{bmatrix}^\top.$
Then, we can summarize all local updates at iteration $t$ into the compact matrix form,
\begin{align}
\label{eq:alg-stack}
    \x^{t+1} = (\bW \otimes \I_d)(\x^t - \eta_t \m^{t + 1}).
\end{align} 
Let $\x^* = \argmin_{\x \in \R^d} f(\x)$ and $\e_{x}^t  := \bar{\x}^t - \x^*$. Since each $f_i$ is twice-continuously differentiable by Assumption \ref{as:func}, from the Lagrange mean value theorem, we can define $\bH_{f, i}$ for each $i \in [n]$ as in the following, 
\begin{align}
\begin{split}
\label{eq:hf_def} 
    \nabla f_i(\ox^t) - \nabla f_i(\x^*) = \underbrace{\int_{0}^1 \big[\nabla^2 f_i (\x^* + s(\ox^t - \x^*) \big] ds}_{:= \bH_{f, i}^t} (\ox^t - \x^*) =  \bH_{f, i}^t \e_x^t. 
\end{split}
\end{align} 
Recall that we use the function $p$ to denote the density of gradient noise. We define the following expectation over gradient noise,
\begin{align}
\label{eq:phimean}
    \Phi_t(w) := \int_{-\infty}^{\infty} \Psi_t(w + u) p(u) du, \ \forall w \in \R. 
\end{align}
For column vector $\w = [w_1, \ldots, w_d]^\top \in \R^d$, we define the column vector $\bPhi_t(\w) := [\Phi_t(w_1), \ldots, \Phi_t(w_d)]^\top$, 
and the random vector for each $i \in [n]$, 
\begin{align}
\label{eq:eta_i_def}
    \bet_i^t := \bPsi_t(\nabla f_i(\x_i^t) + \bxi^t_i - \m_i^t) - \bPhi_t(\nabla f_i(\x_i^t) - \m_i^t).
\end{align} 
We define a diagonal matrix $\bH_{\Phi, i}^t$ to \textit{linerize} nonlinear operators $\bPhi_t$, i.e., $\forall i \in [n]$,
\begin{align} 
\label{eq:hpsi_i_def}
\begin{split}
     \bPhi_t( \gd f_i(\x_i^t) - \m_i^t)  = \bH_{\Phi, i}^t(\gd f_i(\x_i^t) - \m_i^t). 
\end{split} 
\end{align}
Note that if the $k$-th component $ [\gd f_i(\x) - \m_i^t]_k$ is zero, then the $k$-th diagonal entry of $\bH_{\Phi, i}^t$ can be defined as any quantity. Following from the main update \eqref{eq:main_update}, and using  the relation that $\sum_{i=1}^n \nabla f_i(\x^*) = 0$, \eqref{eq:hf_def}-\eqref{eq:eta_i_def}, one has the following main recursion for the error term $\e_x^t$:
\begin{align*}
\begin{split}
\label{eq:ex_rec_dist_sto} 
     & \ox^{t+1} - \x^*  \\ 
    = \ & \e_x^t - \frac{\eta_t}{n}\sum_{i=1}^n\Big[ \beta_t \m_i^t + (1 - \beta_t) \cdot  \\
    & \qquad \Big( \bH_{\Phi, i}^t \Big( \nabla f_i(\x_i^t) - \nabla f_i(\ox^t) + \nabla f_i(\ox^t) - \nabla f_i(\x^*)  - (\m_i^t - \gd f_i(\x^*))\Big) + \bet_i^t \Big)   \Big]  \\ 
    = \ & \Big[\bI - \frac{\eta_t (1-\beta_t)}{n}\sum_{i=1}^n \bH_{\Phi, i}^t \bH_{f, i}^t\Big] \e_{x}^t  - \frac{\eta_t \beta_t}{n} \sum_{i=1}^n \m_i^t \\
    \ & \qquad + \frac{\eta_t (1-\beta_t)}{n} \sum_{i=1}^n \Big( \bH_{\Phi, i}^t \Big(\nabla f_i(\ox^t) - \nabla f_i(\x_i^t) + \m_i^t - \nabla f_i(\x^*)\Big)  - \bet_i^t \Big). 
\end{split}
\end{align*}
We define the following, 
\begin{align}
& \bu_t := \Big[ \bI - \frac{\eta_t (1-\beta_t)}{n}\sum_{i=1}^n \bH_{\Phi, i}^t \bH_{f, i}^t\Big] \e_{x}^t, \\
& \bv_t := - \frac{\eta_t \beta_t}{n} \sum_{i=1}^n \m_i^t,  \\
& \z_t := \frac{\eta_t (1-\beta_t)}{n} \sum_{i=1}^n \Big( \bH_{\Phi, i}^t \Big(\nabla f_i(\ox^t) - \nabla f_i(\x_i^t) + \m_i^t - \nabla f_i(\x^*)\Big)  \Big),  \label{eq:ztdef} \\
& \br_t := -\frac{\eta_t (1-\beta_t)}{n}\sum_{i=1}^n \bet_i^t. \label{eq:rtdef}
\end{align} 
Then we can simplify the main recursion as
\begin{align}
\label{eq:update_abstract}
\e_x^{t+1} 
& = \bu_t + \bv_t + \z_t + \br_t. 
\end{align}   

\subsection{Intermediate lemmas}
We first show that at each node $i \in [n]$ the local gradient estimator $\m_i^t$ is upper bounded by a decreasing sequence. 
\begin{lemma} 
\label{lm:mt_bound_i} 
For each $i \in [n]$, $\forall t \ge 0$, $|\m_i^{t + 1}| \le 2 \cp / \sqrt{t+ 1}$. 
\end{lemma}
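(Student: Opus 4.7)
The plan is to argue by induction on $t$, exploiting the fact that the smooth clipping operator $\bPsi_t$ is a bounded operator and the update for $\m_i^{t+1}$ is a convex combination.

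First I would record the crucial uniform bound on $\bPsi_t$. By the definition $\Psi_t(y) = y\varphi_t/\sqrt{y^2+\epsilon_t}$, the ratio $|y|/\sqrt{y^2+\epsilon_t} \le 1$ holds for every $y \in \R$ and every $\epsilon_t > 0$, so $|\Psi_t(y)| \le \varphi_t$ and hence componentwise $|\bPsi_t(\y)| \le \varphi_t \one$ for all $\y \in \R^d$. Since the update
\[
\m_i^{t+1} = \beta_t \m_i^t + (1-\beta_t)\bPsi_t(g_i(\x_i^t) - \m_i^t)
\]
is component-wise and $\beta_t$ is a scalar in $[0,1)$, the triangle inequality applied component-wise yields
\[
|\m_i^{t+1}| \le \beta_t |\m_i^t| + (1-\beta_t)\varphi_t \one.
\]

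Next I would carry out the induction with hypothesis $|\m_i^t| \le 2c_\varphi/\sqrt{t}$ for $t \ge 1$ (with the convention that the base case $t = 0$ is the trivial initialization $\m_i^0 = \zero$). For the base case, $|\m_i^1| \le (1-\beta_0)\varphi_0 \le c_\varphi = 2c_\varphi/\sqrt{1}\cdot\tfrac12$, so the bound holds with room to spare. For the inductive step, substituting $\varphi_t = c_\varphi/\sqrt{t+1}$ into the displayed inequality gives
\[
|\m_i^{t+1}| \le \beta_t\cdot\frac{2c_\varphi}{\sqrt{t}} + (1-\beta_t)\cdot\frac{c_\varphi}{\sqrt{t+1}},
\]
and it remains to verify
\[
\beta_t\cdot\frac{2}{\sqrt{t}} + (1-\beta_t)\cdot\frac{1}{\sqrt{t+1}} \le \frac{2}{\sqrt{t+1}},
\]
which after rearrangement becomes $\beta_t\bigl(2\sqrt{(t+1)/t} - 1\bigr) \le 1$. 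Since $\sqrt{(t+1)/t} \le \sqrt{2}$ for $t\ge 1$ and $\beta_t = c_\beta/\sqrt{t+1}$ is monotone decreasing, this inequality reduces to a finite check: worst case at $t=1$ gives $\beta_1(2\sqrt{2}-1) \le 1$, and the parameter regime $0 \le c_\beta < 1$ (possibly after an explicit sharpening, e.g.\ $c_\beta \le \sqrt{2}/(2\sqrt{2}-1)$) closes the induction.

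The main obstacle I expect is not conceptual but bookkeeping: forcing the factor $2$ through the induction requires the $\beta_t$-weight on the larger $\m_i^t$-term to be dominated by the improvement in $1/\sqrt{t+1}$ versus $1/\sqrt{t}$. All of this is driven by the uniform clipping bound $|\bPsi_t| \le \varphi_t\one$ combined with the choice $\varphi_t = c_\varphi/\sqrt{t+1}$ and the contractive role of $\beta_t$, and does not require any assumption on the noise distribution beyond what is available at this point. If the induction constants are uncomfortable at small $t$, an alternative would be to prove the bound for $t \ge t_0$ for some explicit $t_0$ and absorb the finitely many initial terms into the constant $2$; this would not alter the structure of the argument.
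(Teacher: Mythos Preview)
Your induction approach is sound in spirit but differs from the paper's route, and as written it does not quite close for the full range $0\le c_\beta<1$: at $t=1$ the requirement $\beta_1(2\sqrt{2}-1)\le 1$ forces $c_\beta\le \sqrt{2}/(2\sqrt{2}-1)\approx 0.773$, so your primary induction fails for $c_\beta$ close to $1$. Your fallback of checking a few small $t$ by hand and starting the induction later can be made to work (in fact $t\ge 2$ already suffices, and $|\m_i^2|$ can be bounded directly), but you would need to spell this out rather than appeal to ``absorbing into the constant $2$'', since the lemma fixes the constant at exactly~$2$.

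The paper's argument avoids this bookkeeping by a two-step bootstrap: first, from $|\m_i^{t+1}|\le\max(|\m_i^t|,\varphi_t)$ and the monotonicity of $\varphi_t$ with $\m_i^0=\zero$, one gets the crude uniform bound $|\m_i^t|\le c_\varphi$ for all $t$; second, one substitutes this once into $|\m_i^{t+1}|\le\beta_t|\m_i^t|+(1-\beta_t)\varphi_t$ together with $\beta_t=c_\beta/\sqrt{t+1}$ to obtain $|\m_i^{t+1}|\le (c_\beta+1-\beta_t)c_\varphi/\sqrt{t+1}\le 2c_\varphi/\sqrt{t+1}$, using only $c_\beta<1$. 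This is shorter, needs no case split at small $t$, and works for the full parameter range without sharpening $c_\beta$. Your inductive scheme, once patched at $t=1$, buys nothing extra here since neither argument uses the noise assumptions.
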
 
\begin{proof}
From the step 4 in Algorithm \ref{alg:sen}, and the fact that $|\bPsi_t(g_i(\x_i^t) - \m_i^t)| \le \varphi_t$, we have that for each $i \in [n]$, 
\begin{align}
    |\m_i^{t+1}| \le \beta_t |\m_i^t| +  (1-\beta_t) \varphi_t 
    \label{eq:m_update}, 
\end{align} 
which leads to $|\m_i^{t+1}| \le \max \big(\|\m_i^t\|_\infty, \ \varphi_t \big)$.  Since $\varphi_t = \cp/\sqrt{t + 1}$ is decreasing in $t$, and $\m_i^0 = \zero$, we have $\forall t \ge 0, |\m_i^{t}| \le \cp$. Then substituting the preceding inequality along with $\beta_t = \cb/\sqrt{t + 1}$ into \eqref{eq:m_update}, we obtain
\begin{align*}
    |\m_i^{t+1}| \le \frac{ \cb \cp }{\sqrt{t + 1}} + \frac{\cp(1 - \beta_t)}{\sqrt{t + 1}} = \frac{2\cp}{\sqrt{t + 1}}.
\end{align*}
\end{proof} 

We next bound the consensus error $\|\x_i^t - \ox^t\|$ over the network. 
\begin{lemma}
\label{lm:consensus} 
The iterates $\{\x^t\}_{t \ge 0}$ generated by Algorithm \ref{alg:sen} satisfy that for some constant $c_{\lambda} $, that is dependent on $\lambda$ in Assumption \ref{as:mixmatrix}, such that $\forall t \ge 0$,
\begin{align}
\label{eq:consensus-error-1}
    \|\x^t - \one_n \otimes \ox^t \| \le \frac{2  \cp c_\eta c_{\lambda}  \sqrt{dn}}{(t + 1)^{7/10}}.
\end{align}
\end{lemma}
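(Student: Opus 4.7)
The plan is to derive a one-step recursion for the consensus error $a_t := \|\x^t - \one_n \otimes \ox^t\|$ and then unroll it, using the decay of $\eta_t \|\m^{t+1}\|$ already implied by Lemma \ref{lm:mt_bound_i}. From the stacked update \eqref{eq:alg-stack} and the average update \eqref{eq:x_avg_update}, I would write
\begin{align*}
\x^{t+1} - \one_n \otimes \ox^{t+1} = \Big(\bW - \tfrac{1}{n}\one_n\one_n^\top\Big) \otimes \bI_d \cdot \big(\x^t - \eta_t \m^{t+1}\big),
\end{align*}
and then use the fact that $\big(\bW - \tfrac{1}{n}\one\one^\top\big)\one_n = \zero$ to replace $\x^t$ by $\x^t - \one_n \otimes \ox^t$ inside the bracket without changing the value. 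Taking the Euclidean norm and invoking $\|\bW - \tfrac{1}{n}\one\one^\top\|_2 = \lambda$ from Assumption \ref{as:mixmatrix} gives the one-step recursion
\begin{align*}
a_{t+1} \ \le \ \lambda\, a_t + \lambda\, \eta_t \|\m^{t+1}\|.
\end{align*}

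Next, I would control $\|\m^{t+1}\|$ via Lemma \ref{lm:mt_bound_i}: since $|\m_i^{t+1}| \le 2\cp/\sqrt{t+1}$ componentwise for each $i \in [n]$, stacking gives $\|\m^{t+1}\| \le 2\cp\sqrt{dn}/\sqrt{t+1}$. Combined with $\eta_t = \cet/(t+1)^{1/5}$, this yields $\eta_t\|\m^{t+1}\| \le 2\cp\cet\sqrt{dn}/(t+1)^{7/10}$. Since the initial values agree so that $a_0 = 0$, unrolling the recursion produces
\begin{align*}
a_t \ \le \ 2\cp\cet\sqrt{dn}\, \sum_{s=0}^{t-1} \frac{\lambda^{t-s}}{(s+1)^{7/10}}.
\end{align*}

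The main (and only nontrivial) step is to show that the geometric-polynomial convolution above is bounded by $c_\lambda / (t+1)^{7/10}$ for a constant $c_\lambda$ depending only on $\lambda \in [0,1)$. I would split the sum at $s = \lfloor t/2 \rfloor$: on the lower half $s \le t/2$ the factor $\lambda^{t-s} \le \lambda^{t/2}$ decays exponentially and thus dominates any polynomial rate, contributing a term that is $o\!\big((t+1)^{-7/10}\big)$ and can be absorbed into $c_\lambda$; on the upper half $s > t/2$ we have $(s+1)^{-7/10} \le 2^{7/10}(t+1)^{-7/10}$, while $\sum_{s > t/2} \lambda^{t-s} \le \lambda/(1-\lambda)$, yielding the desired bound. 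The only subtlety is collecting the various $\lambda$-dependent constants and the contribution from the exponentially small head into a single constant $c_\lambda$ uniform in $t$; once that is done, the stated bound follows directly.
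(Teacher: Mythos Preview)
Your proposal is correct and follows essentially the same route as the paper. The paper expands the stacked iterate $\x^{t+1}$ all at once and then applies the consensus projection, whereas you derive the one-step recursion $a_{t+1}\le\lambda a_t+\lambda\eta_t\|\m^{t+1}\|$ and unroll it; both arrive at the identical geometric--polynomial convolution $\sum_{s=0}^{t-1}\lambda^{t-s}/(s+1)^{7/10}$, and your split-at-$\lfloor t/2\rfloor$ argument is precisely the content of the paper's Lemma~\ref{lm:cvg_series}.
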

\begin{proof}
From the global update rule \eqref{eq:alg-stack} we have $\forall t \ge 0$, 
\begin{align}
\label{eq:xt}
    \x^{t + 1 } = (\bW \otimes \I_d)^{t + 1} \x^0 -  \sum_{k = 0}^{t }  (\bW \otimes \I_d)^{t + 1 -k} \eta_k \m^{k + 1}.
\end{align}
Recall that $\bW$ is doubly stochastic, which yields that 
\begin{align}
\begin{split}
\label{eq:ij_sto}
    & \big( \I_{nd} - \frac{1}{n}\mathbf{1}_n \mathbf{1}_n^{\top} \otimes \I_d \big) (\bW \otimes \I_d)\\
    = \ & \big( \bW \otimes \I_d  - \frac{1}{n} \one_n \one_n^\top \otimes \I_d \big) \big( \I_{nd} - \frac{1}{n}\mathbf{1}_n \mathbf{1}_n^{\top} \otimes \I_d \big). 
\end{split} 
\end{align}
From \eqref{eq:ij_sto} and the initialization condition $\x_i^0 = \x_j^0, \forall i, j \in [n]$, we have $\big( \I_{nd} - (1/n) \mathbf{1}_n \mathbf{1}_n^{\top} \otimes \I_d \big) (\bW \otimes \I_d)^{t + 1} \x^0 = \zero$,
and
\begin{align}
\label{eq:cns_main}
\begin{split}
& \|\x^{t + 1} - \mathbf{1}_n \otimes \ox^{t + 1}\| \\
= \ &  \norm{ \big( \I_{nd} - \frac{1}{n} \mathbf{1}_n \mathbf{1}_n^\top \otimes \I_d \big)\sum_{k = 0}^{t}(\bW \otimes \I_d)^{t + 1 -k} \eta_k  \m^{ k + 1 }} \\
\le \ &  \sum_{k=0}^{t} \norm{ \bW -  \frac{1}{n}\mathbf{1}_n \mathbf{1}_n^\top }_2^{t + 1 - k} \norm{ \eta_k \m^{k + 1 } } \\
\le \  &  2  \cp \cet  \sqrt{d n }  \sum_{k=0}^{t} \frac{\lambda^{t + 1 -k}}{(k + 1)^{7/10}}, 
\end{split}
\end{align} 
where in the last step we use $\| \m_i^{t + 1} \| \le 2 \cp  \sqrt{d} /\sqrt{t + 1}$ from Lemma \ref{lm:mt_bound_i}. The upper bound in \eqref{eq:cns_main} falls into the pursuit of Lemma \ref{lm:cvg_series}, i.e., there exists constant $c_{\lambda} $, that is dependent on $\lambda$ in Assumption \ref{as:mixmatrix}, such that $\forall t \ge 1$, \eqref{eq:consensus-error-1} holds true. As $\x^t - \one_n \otimes \ox^t = 0$ when $t = 0$, the lemma is thereby proved.
\end{proof}

We next show that the virtual iterates $\{\ox^t\}$ has bounded growth. 
\begin{lemma}
\label{lm:x_bound_sto}
For any $t \ge 0$, $|\ox^t - \ox^0| \le (20/3) \cet \cp  t^{3/10}$. 
\end{lemma}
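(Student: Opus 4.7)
The plan is a direct telescoping argument on the average iterate, combined with the pointwise momentum bound already established in Lemma \ref{lm:mt_bound_i} and a simple sum-to-integral comparison for the step-size/momentum product.

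First, I would unroll the recursion for $\ox^t$ given in \eqref{eq:x_avg_update} to write
\[
\ox^t - \ox^0 \;=\; -\sum_{k=0}^{t-1} \frac{\eta_k}{n} \sum_{i=1}^n \m_i^{k+1}.
\]
Applying the triangle inequality componentwise and invoking Lemma \ref{lm:mt_bound_i}, which gives $|\m_i^{k+1}| \le 2\cp/\sqrt{k+1}$ uniformly in $i$, the inner average satisfies $\bigl|(1/n)\sum_{i=1}^n \m_i^{k+1}\bigr| \le 2\cp/\sqrt{k+1}$. Substituting the step-size choice $\eta_k = c_\eta/(k+1)^{1/5}$ from \eqref{eq:sce-hypara} then yields
\[
|\ox^t - \ox^0| \;\le\; 2\cp c_\eta \sum_{k=0}^{t-1} \frac{1}{(k+1)^{1/5+1/2}} \;=\; 2\cp c_\eta \sum_{j=1}^{t} j^{-7/10}.
\]

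Second, I would bound the resulting partial sum by integral comparison. Since $x \mapsto x^{-7/10}$ is decreasing on $(0,\infty)$, a standard upper Riemann-sum estimate gives
\[
\sum_{j=1}^{t} j^{-7/10} \;\le\; \int_{0}^{t} x^{-7/10}\, dx \;=\; \frac{10}{3}\, t^{3/10}.
\]
Combining the two displays produces $|\ox^t - \ox^0| \le (20/3)\, c_\eta \cp\, t^{3/10}$, which is exactly the claimed bound. The case $t = 0$ is trivially zero.

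There is no substantive obstacle here: the estimate is just a sum of a decaying sequence whose exponent $7/10$ is strictly less than $1$, so the partial sums grow at the sublinear rate $t^{3/10}$. The only subtle point worth being careful about is the componentwise interpretation of the absolute value, but that is handled uniformly since the bound in Lemma \ref{lm:mt_bound_i} is itself componentwise and the averaging over $i$ is linear.
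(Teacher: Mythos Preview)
Your proposal is correct and follows essentially the same approach as the paper: telescoping the averaged update \eqref{eq:x_avg_update}, applying the componentwise bound from Lemma \ref{lm:mt_bound_i} together with the step-size choice to get the summand $2c_\eta\cp/(k+1)^{7/10}$, and then bounding the partial sum by the integral $\int_0^t s^{-7/10}\,ds = (10/3)t^{3/10}$. The only cosmetic difference is that the paper writes the bound for $\ox^{t+1}$ and integrates up to $t+1$, whereas you index directly at $t$; the resulting inequalities coincide.
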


\begin{proof}
From the update \eqref{eq:x_avg_update}, for $t \ge 0$, $\ox^{t+1} = \ox^0 - (1/n) \sum_{k = 0}^t \sum_{i=1}^n \eta_k  \m_i^{k+1}$. 
Then, by Lemma \ref{lm:mt_bound_i} and triangle inequality, it follows that 
\begin{align}
\begin{split}
    |\ox^{t+1} - \ox^0| \le \sum_{k=0}^t \frac{2c_\eta \cp}{(k+1)^{7/10}}  \le \int_0^{t+1} \frac{2c_\eta \cp}{ s^{7/10}} ds = \frac{20 c_\eta \cp}{3} (t+1)^{3/10}. 
\end{split}
 \end{align}
\end{proof}

\begin{lemma}
\label{lm:zetak_ub_dist}
Define $R := \| \ox^0 - \x^*\|$. For each $i \in [n]$, and $\forall t \ge 0$, we have
\begin{align*}
     |\nabla f_i(\x_i^t) - \m_i^t|
    \le  \frac{20 \cet \cp \sqrt{d} L}{3} t^{3/10}  + LR + c_* + \frac{2 \sqrt{2} \cp}{\sqrt{t + 1}} + \frac{2 \cp c_\eta c_{\lambda}  \sqrt{dn} L}{(t + 1)^{7/10}}. 
\end{align*}

\end{lemma}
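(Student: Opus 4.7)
The plan is to bound $|\nabla f_i(\x_i^t) - \m_i^t|$ via the four-term component-wise triangle inequality
\[
|\nabla f_i(\x_i^t) - \m_i^t| \le |\nabla f_i(\x_i^t) - \nabla f_i(\ox^t)| + |\nabla f_i(\ox^t) - \nabla f_i(\x^*)| + |\nabla f_i(\x^*)| + |\m_i^t|,
\]
and bound each of the four summands using previously established tools. The strategy is essentially bookkeeping: match each summand to one of the five terms on the right-hand side of the claimed inequality.

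For the third and fourth summands, I would invoke Assumption~\ref{as:bdd_hete} to get $|\nabla f_i(\x^*)| \le c_*\one$, which produces the constant term $c_*$, and Lemma~\ref{lm:mt_bound_i} to bound the local gradient tracker. Lemma~\ref{lm:mt_bound_i} gives $|\m_i^{t+1}| \le 2c_\varphi/\sqrt{t+1}$; shifting the index by one and using the elementary bound $1/\sqrt{t} \le \sqrt{2}/\sqrt{t+1}$ valid for $t \ge 1$ (the case $t=0$ is handled separately since $\m_i^0 = \zero$), this yields $|\m_i^t| \le 2\sqrt{2}c_\varphi/\sqrt{t+1}$, matching the fourth term of the target bound.

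For the first and second summands, I would appeal to $L$-smoothness from Assumption~\ref{as:func} to write
\[
|\nabla f_i(\x_i^t) - \nabla f_i(\ox^t)| \le L\,\|\x_i^t - \ox^t\|_2\,\one, \qquad |\nabla f_i(\ox^t) - \nabla f_i(\x^*)| \le L\,\|\ox^t - \x^*\|_2\,\one,
\]
where I use that every entry of a vector is bounded in absolute value by its Euclidean norm. Lemma~\ref{lm:consensus} directly controls $\|\x_i^t - \ox^t\|_2 \le 2c_\varphi c_\eta c_\lambda \sqrt{dn}/(t+1)^{7/10}$, producing the last term in the claim. For the remaining distance, I would split $\|\ox^t - \x^*\|_2 \le \|\ox^t - \ox^0\|_2 + R$ and, since Lemma~\ref{lm:x_bound_sto} gives the component-wise bound $|\ox^t - \ox^0| \le (20/3)c_\eta c_\varphi\, t^{3/10}\,\one$, I would convert this to the Euclidean norm bound $\|\ox^t - \ox^0\|_2 \le \sqrt{d}\,(20/3)c_\eta c_\varphi\, t^{3/10}$.

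No step is technically hard; the only subtlety to watch for is the consistent conversion between component-wise absolute-value bounds (produced by Lemmas~\ref{lm:mt_bound_i} and \ref{lm:x_bound_sto}) and $\ell_2$-norm bounds (needed to invoke $L$-smoothness), which is exactly what introduces the extra $\sqrt{d}$ factor in front of the $t^{3/10}$ term. Collecting the four pieces gives the stated inequality.
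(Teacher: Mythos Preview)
Your proposal is correct and follows essentially the same approach as the paper: the paper uses the slightly finer five-term split $|\nabla f_i(\x_i^t)-\m_i^t| \le \|\nabla f_i(\x_i^t)-\nabla f_i(\ox^t)\| + \|\nabla f_i(\ox^t)-\nabla f_i(\ox^0)\| + \|\nabla f_i(\ox^0)-\nabla f_i(\x^*)\| + |\nabla f_i(\x^*)| + |\m_i^t|$, whereas you merge the middle two terms and split at the norm level via $\|\ox^t-\x^*\|\le \|\ox^t-\ox^0\|+R$, which is equivalent. The remaining bookkeeping (Lemmas~\ref{lm:mt_bound_i}, \ref{lm:consensus}, \ref{lm:x_bound_sto}, Assumption~\ref{as:bdd_hete}, and the $\sqrt d$ conversion) matches the paper exactly.
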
 

\begin{proof}
With the $L$-smoothness in Assumption \ref{as:func}, for each $i \in [n]$, 
\begin{align}
\begin{split}
\label{eq:zeta_upperbound}
    & |\nabla f_i(\x_i^t) - \m_i^t|  \\
    \le \ & \|\nabla f_i(\x_i^t) - \nabla f_i(\ox^t)\| + \|\nabla f_i(\ox^t) - \nabla f_i(\ox^0)\| + \| \nabla f_i(\ox^0) - \nabla f_i(\x^*)\| \\
    \ & \qquad + |\nabla f_i(\x^*)| + |\m_i^t| \\ 
    \le \ & L \|\x_i^t - \ox^t\| + L \|\ox^t - \ox^0\| + L \|\ox^0 - \x^* \| + c_* + 2\sqrt{2} \cp/\sqrt{t + 1}.
\end{split}
\end{align}
Applying the upper bounds of $\|\x_i^t - \ox^t\|$ and $\|\ox^t - \ox^0\|$ from Lemma \ref{lm:consensus} and Lemma \ref{lm:x_bound_sto}, respectively, gives the required result. 
\end{proof}

\begin{lemma}
\label{lm:findcb} 
Suppose that Assumption \ref{as:ht-noise} holds true. Then, for any $a > 0$, we have $\int_{-a}^{a} p(u) du \ge 1 - \sigma/a$. For any $0 < c_b < 1$, we have $\int_{-b}^b p(u) du \ge c_b$ for $b = \sigma /(1 - c_b)$.
\end{lemma}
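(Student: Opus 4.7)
The plan is to derive this as a direct consequence of Markov's inequality applied to the absolute noise $|u|$, leveraging the bounded first absolute moment from Assumption \ref{as:ht-noise}, and then to use the first statement to pick the threshold $b$ that achieves the desired mass $c_b$.

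First I would prove the inequality $\int_{-a}^{a} p(u)\, du \ge 1 - \sigma/a$ for any $a > 0$. The key observation is that for $|u| > a$ one has $|u|/a > 1$, so
\begin{equation*}
    \int_{|u| > a} p(u)\, du \;\le\; \int_{|u| > a} \frac{|u|}{a}\, p(u)\, du \;\le\; \frac{1}{a} \int_{-\infty}^{\infty} |u|\, p(u)\, du \;\le\; \frac{\sigma}{a},
\end{equation*}
using the bounded first absolute moment from Assumption \ref{as:ht-noise}. Since $p$ is a probability density, $\int_{-a}^{a} p(u)\, du = 1 - \int_{|u|>a} p(u)\, du \ge 1 - \sigma/a$, which establishes the first claim.

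Second, given $0 < c_b < 1$, I would choose $b = \sigma/(1 - c_b) > 0$ so that $\sigma/b = 1 - c_b$. Applying the first part with $a = b$ yields
\begin{equation*}
    \int_{-b}^{b} p(u)\, du \;\ge\; 1 - \frac{\sigma}{b} \;=\; 1 - (1 - c_b) \;=\; c_b,
\end{equation*}
which is the second claim.

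There is no substantive obstacle here; the symmetry of $p$ is not even needed for this particular lemma, and the argument is a clean one-line Markov bound followed by solving for $b$. The lemma will be used later to guarantee a uniform lower bound on the probability that the noise lies in a bounded window, which is standard in analyses leveraging a lower bound on $\bPhi_t$ at small arguments.
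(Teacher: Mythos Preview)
Your proof is correct and follows essentially the same Markov-inequality argument as the paper: bound the tail mass $\int_{|u|>a} p(u)\,du$ by $\sigma/a$ using the first-absolute-moment bound, then solve for $b$. The only cosmetic difference is that the paper splits the tail into two halves and additionally notes (via symmetry) that each half is at most $\sigma/(2a)$, but this extra observation is not used in the lemma's conclusion, and your remark that symmetry is unnecessary here is accurate.
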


\begin{proof}
For any constant $a > 0$, 
\begin{align*}
    a \big( \int_{-a}^{-\infty} p(u)  du  + \int_{a}^\infty p(u) du \big) \le \int_{-a}^{-\infty} |u| p(u) du + \int_{a}^\infty |u| p(u) du \le \sigma.
\end{align*}
It follows that $\int_{a}^\infty p(u) du \le  \sigma/(2a) \text{ and }
    \int_{-a}^{a} p(u) du \ge 1 - \sigma/a.$
Let $b = \sigma/(1 - c_b)$, then $\int_{-b}^{b} p(u) du \ge 1 - \sigma/b = c_b$.
\end{proof}

\begin{lemma}
\label{lm:hphi_bound_i}
For any fixed constants $0 < c_b < 1, \ 0 < \varepsilon < 1$. Let
\begin{align*}
    c_1 := \frac{(1 - \varepsilon) c_b\cp}{\sqrt{ 98 \cet^2 \cp^2 d L^2 + 2\sigma^2/(1 - c_b)^2 +  \tau  }}, \quad  
    c_2 := \frac{(1 + \varepsilon) \cp}{\sqrt{\tau}}. 
\end{align*}
Then, there exists some finite $t_0$, such that for each $i \in [n]$, $\forall t \ge t_0$, all the diagonal entries of $\bH_{\Phi, i}^t$ fall into the interval $\big[ c_1/(t+1)^{4/5}, c_2/(t+1)^{4/5} \big]$.   
\end{lemma}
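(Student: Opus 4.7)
The plan is to reduce the matrix bound to a scalar bound on $\Phi_t(w)/w$ for $w \in \R$, and then sandwich that ratio using the symmetry of $p$ together with the growth estimate of Lemma~\ref{lm:zetak_ub_dist}. Fix $i \in [n]$ and a coordinate $k$, and set $w := [\nabla f_i(\x_i^t) - \m_i^t]_k$. Because $p$ is symmetric and $\Psi_t$ is odd, $\Phi_t$ is odd and hence the $k$-th diagonal entry of $\bH_{\Phi,i}^t$ equals $\Phi_t(w)/w$ whenever $w \neq 0$ (the value at $w=0$ may be fixed as the limit $\Phi_t'(0)$); moreover $\Phi_t(w)/w$ is even in $w$, so I may assume $w \ge 0$. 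Using $\int \Psi_t(u) p(u)\,du = 0$ (from symmetry plus oddness), I write $\Phi_t(w) = \int[\Psi_t(w+u) - \Psi_t(u)]p(u)\,du$, apply the fundamental theorem of calculus inside, and exchange integrals to obtain
\[
\Phi_t(w)/w \;=\; \tfrac{1}{w}\int_0^w H(s)\,ds, \qquad H(s) := \int \Psi_t'(s+u) p(u)\,du,
\]
where $\Psi_t'(y) = \varphi_t\epsilon_t/(y^2+\epsilon_t)^{3/2}$ is even and strictly decreasing in $|y|$. Consequently $H$ is even and decreasing in $|s|$, which immediately gives the two-sided envelope $H(w) \le \Phi_t(w)/w \le H(0)$.

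For the upper bound, $H(0) \le \Psi_t'(0) = \varphi_t/\sqrt{\epsilon_t}$; substituting $\varphi_t = \cp/\sqrt{t+1}$ and $\epsilon_t = \tau(t+1)^{3/5}$ yields $\Phi_t(w)/w \le \cp/\bigl(\sqrt{\tau}(t+1)^{4/5}\bigr) \le c_2/(t+1)^{4/5}$, the $(1+\varepsilon)$ factor in $c_2$ providing harmless slack.

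For the lower bound, I restrict $H(w)$ to the noise-concentration window $[-b, b]$ with $b := \sigma/(1-c_b)$, invoke Lemma~\ref{lm:findcb} to bound $\int_{-b}^{b} p(u)\,du \ge c_b$, and use $(w+u)^2 \le (|w|+b)^2$ on that set to conclude $H(w) \ge c_b \varphi_t \epsilon_t/\bigl((|w|+b)^2 + \epsilon_t\bigr)^{3/2}$. Next I control $(|w|+b)^2 + \epsilon_t$. Componentwise, $|w|$ is bounded by the scalar in Lemma~\ref{lm:zetak_ub_dist}, whose dominant-in-$t$ term is $(20/3)\cet\cp\sqrt{d} L (t+1)^{3/10}$ and whose remaining terms are strictly of lower order. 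Applying $(|w|+b)^2 \le 2w^2 + 2b^2$ (AM–GM with weight $1$) and absorbing the lower-order slack into a small inflation factor, one gets, for $t \ge t_0$ large enough,
\[
(|w|+b)^2 + \epsilon_t \;\le\; \bigl(98\,\cet^2 \cp^2 dL^2 + 2\sigma^2/(1-c_b)^2 + \tau\bigr)(t+1)^{3/5},
\]
where numerically $2(20/3)^2 \approx 88.89 < 98$ leaves room for the transient terms $LR$, $c_*$, etc., to be absorbed; the factor $2b^2 = 2\sigma^2/(1-c_b)^2$ comes directly from AM–GM, and $\tau(t+1)^{3/5}$ is simply $\epsilon_t$. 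Substituting back with $\varphi_t \epsilon_t = \cp\tau(t+1)^{1/10}$ gives $\Phi_t(w)/w \ge c_b\cp\tau/\bigl(D^{3/2}(t+1)^{4/5}\bigr)$ with $D := 98\cet^2\cp^2 dL^2 + 2\sigma^2/(1-c_b)^2 + \tau$, which is matched to $c_1/(t+1)^{4/5}$ after enlarging $t_0$ so that the residual sub-leading corrections are absorbed into the $(1-\varepsilon)$ factor in $c_1$.

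The main obstacle I anticipate is the bookkeeping in the final lower-bound step: I must choose the AM–GM split, the growth threshold $t_0$, and the inflation factor in the right order so that (i) the transient terms $LR + c_* + O((t+1)^{-1/2})$ of Lemma~\ref{lm:zetak_ub_dist} are dominated by the leading coefficient $(20/3)^2 \cet^2\cp^2 dL^2$ uniformly on $\{t \ge t_0\}$, and (ii) the cross-term interactions between $2w^2$, $2b^2$ and $\tau(t+1)^{3/5}$ collapse cleanly into the single constant $D$ inside the prescribed $(1-\varepsilon)$ tolerance. By contrast, the upper bound and the setup paragraphs are essentially direct consequences of the symmetry of $p$ and the monotonicity of $\Psi_t'$.
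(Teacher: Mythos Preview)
Your reduction to the averaged-derivative representation $\Phi_t(w)/w = \tfrac{1}{w}\int_0^w H(s)\,ds$ with $H(s) = \int \Psi_t'(s+u)p(u)\,du$ is valid and elegant, but there are two genuine gaps.

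First, the assertion that $H$ is even and \emph{decreasing} in $|s|$ does not follow merely from $\Psi_t'$ being even and decreasing in $|y|$: convolution with a symmetric density preserves evenness but not unimodality unless $p$ is itself unimodal, which Assumption~\ref{as:ht-noise} does not impose. (Consider $p$ concentrated near $\pm a$; then $H(0)\approx \Psi_t'(a)$ while $H(a)\approx \tfrac{1}{2}\Psi_t'(0)+\tfrac{1}{2}\Psi_t'(2a)$, which can be larger.) This is patchable---you can bound $H(s)\le \Psi_t'(0)$ directly for the upper bound, and $H(s)\ge c_b\Psi_t'(|w|+b)$ uniformly on $[0,w]$ for the lower bound---so it is not fatal by itself.

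Second, and more seriously, your lower bound is quantitatively too weak to recover the stated $c_1$. With $D:=98\cet^2\cp^2dL^2+2\sigma^2/(1-c_b)^2+\tau$, your estimate gives
\[
\Phi_t(w)/w \ \ge \ c_b\,\varphi_t\epsilon_t\big/\big((|w|+b)^2+\epsilon_t\big)^{3/2} \ \ge \ \frac{c_b\cp\,\tau}{D^{3/2}}\cdot\frac{1}{(t+1)^{4/5}},
\]
whereas the lemma claims $c_1/(t+1)^{4/5}$ with $c_1=(1-\varepsilon)c_b\cp/\sqrt{D}$. The ratio of your constant to $c_1$ is $\tau/\big((1-\varepsilon)D\big)$, and since $\tau<D$ this is strictly less than $1$ whenever $\varepsilon < (D-\tau)/D$. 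This deficit is a \emph{fixed} constant factor, not a sub-leading correction, so it cannot be ``absorbed into the $(1-\varepsilon)$ factor'' by enlarging $t_0$; for small $\varepsilon$ the statement simply fails under your argument. The root cause is the $3/2$-power in $\Psi_t'$.

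The paper avoids this by a different decomposition: it writes $\Psi_t(\zeta+u)/\zeta = \varphi_t/\sqrt{(\zeta+u)^2+\epsilon_t} + (u/\zeta)\cdot\varphi_t/\sqrt{(\zeta+u)^2+\epsilon_t}$ and integrates against $p$, producing $[\bH_{\Phi,i}^t]_{kk}=\msf{S}_{1,k}^t+\msf{S}_{2,k}^t$. The first term carries only a $1/2$-power and directly yields the $c_b\cp/\sqrt{D}$ constant; the second term is handled via the symmetry of $p$ (pairing $u\leftrightarrow -u$) to show $|\msf{S}_{2,k}^t|=O\big((t+1)^{-1.1}\big)$, which \emph{is} sub-leading and legitimately absorbed into the $(1\pm\varepsilon)$ slack. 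Your derivative route bypasses this split and thereby loses exactly the factor $\tau/D$ needed for the stated constants, which matter downstream in Lemma~\ref{lm:cnts_bound} where the ratio $c_2/c_1$ controls $k_a$.
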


\begin{proof}
For simplicity, for each component $k = 1, \ldots, d$, let $\zeta_{k, i}^t := [\nabla f_i(\x_i^t) - \m_i^t]_k$. 
Suppose $\zeta_{k, i}^t \ne 0$. By the definition of $\bH_{\Phi, i}^t$ in \eqref{eq:hpsi_i_def}, its $k$-th diagonal $[\bH_{\Phi, i}^t]_{kk}$ is  
\begin{align}
\begin{split}
\label{eq:HPhikk}
     &  \frac{1}{ \zeta_{k, i}^t} \int_{-\infty}^{\infty}  \frac{\cp}{\sqrt{t + 1 }} \frac{\zeta_{k, i}^t + u}{\sqrt{\big(\zeta_{k, i}^t + u\big)^2 + \epsilon_t}} p(u) du \\
    = \ &  \underbrace{ \int_{-\infty}^{\infty}  \frac{\cp}{\sqrt{t + 1 }} \frac{p(u)}{\sqrt{\big(\zeta_{k, i}^t+ u\big)^2 + \epsilon_t}} du }_{\mathsf{S}_{1, k}^t } +  \underbrace{ \frac{1}{ \zeta_{k, i}^t } \int_{-\infty}^{\infty}  \frac{\cp}{\sqrt{t + 1 }} \frac{ u p(u)}{\sqrt{\big(\zeta_{k, i}^t+ u\big)^2 + \epsilon_t}}  du }_{\msf{S}_{2, k}^t}.
\end{split}
\end{align}
If $\zeta_{k, i}^t = 0$, by the definition of $\bH_{\Phi, i}^t$, we can define $[\bH_\Phi^t]_{kk}$ as any quantity, and thus any bounds on \eqref{eq:HPhikk} trivially follows. We bound the two summands in the last line of \eqref{eq:HPhikk} one by one. 
For the first summand $\mathsf{S}_{1, k}^t$ in \eqref{eq:HPhikk}, we have the upper bound 
\begin{align}
\label{eq:hphi_1_ub}
\begin{split}
    \mathsf{S}_{1, k}^t
    \le   \frac{\cp}{\sqrt{t + 1 }} \frac{1}{\sqrt{\epsilon_t}} = \frac{\cp}{\sqrt{\tau}(t+1)^{4/5}}.
\end{split}
\end{align}
From Lemma \ref{lm:zetak_ub_dist}, 
one can take $t_{0, 1}$ as the minimum $t$ so that $| \zeta_{i, k} | \le 7 \cet \cp \sqrt{d} L t^{3/10}$. 
Take $0 < c_b < 1$ and $b = \sigma/(1 - c_b)$. From Lemma \ref{lm:findcb}, for $t \ge t_{0, 1}$, one has
\begin{align}
\label{eq:hphi_1_lb}
\begin{split}
    \mathsf{S}_{1, k}^t \ge & \  \frac{\cp}{\sqrt{t + 1 }} \int_{-b}^{b} \frac{p(u)}{\sqrt{2(\zeta_{k, i}^t)^2 + 2u^2 + \epsilon_t}} du \\
    \ge & \ \frac{\cp}{\sqrt{t + 1 }} \frac{ c_b}{\sqrt{2(\zeta_{k, i}^t)^2 + 2b^2 + \epsilon_t}} \\
    \ge & \ \frac{c_b\cp}{\sqrt{ 98 \cet^2 \cp^2 d L^2 + 2\sigma^2/(1 - c_b)^2 +  \tau}} \frac{1}{(t+1)^{4/5}}.
\end{split}
\end{align}
We next proceed to bound the second summand $\mathsf{S}_{2, k}^t$ in \eqref{eq:HPhikk}. Define that
\begin{align}
\label{eq:pqdecom}
    P_{k, i}^t := \frac{1}{\sqrt{(\zeta_{k, i}^t - u)^2 + \epsilon_t}}, \ Q_{k, i}^t := \frac{1}{\sqrt{(\zeta_{k, i}^t + u)^2 + \epsilon_t}}.
\end{align}
By the symmetry of density function $p$, we have that 
\begin{align}
\label{eq:pq1}
    \int_{-\infty}^{\infty} \frac{ u p(u)}{\sqrt{\big(\zeta_{k, i}^t+ u\big)^2 + \epsilon_t}} du = \int_0^\infty (Q_{k, i}^t-P_{k, i}^t) u p(u) du.  
\end{align}
We only need to bound $|Q_{k, i}^t - P_{k, i}^t|$ for the case where $u > 0$.  If $\zeta_{k, i} > 0$, we have
\begin{align}
\begin{split}
\label{eq:pq2}
     |Q_{k, i}^t - P_{k, i}^t| 
    = & \ \frac{|(Q_{k, i}^t)^2 - (P_{k, i}^t)^2|}{Q_{k, i}^t + P_{k, i}^t} \\
    \le & \ \frac{4 |u| |\zeta_{k, i}^t|}{[(\zeta_{k, i}^t - u)^2 + \epsilon_t][(\zeta_{k, i}^t + u)^2 + \epsilon_t]} \frac{1}{Q_{k, i}^t} \\
    = & \ \frac{4 |u| |\zeta_{k, i}^t|}{[(\zeta_{k, i}^t - u)^2 + \epsilon_t]\sqrt{(\zeta_{k, i}^t + u)^2 + \epsilon_t}} \\
    \le & \ \frac{4 |u| |\zeta_{k, i}^t|}{\epsilon_t |u| } = \frac{4 |\zeta_{k, i}^t|}{\epsilon_t}. 
\end{split}
\end{align}
If $\zeta_{k, i}^t < 0$, in similar way one can show $|Q_{k, i}^t - P_{k, i}^t| \le 4|\zeta_{k,i}^t|/\epsilon_t$. 
Thus, 
\begin{align}
\begin{split}
\label{eq:hphi_2nd_bound}
  \mathsf{S}_{2, k}^t
  \le \frac{\cp}{|\zeta_{k, i}^t| \sqrt{t + 1} }\int_{0}^\infty |Q_{k, i}^t - P_{k, i}^t| |u| p(u) du  \le \frac{2 \cp \sigma}{\tau (t+1)^{1.1}}, 
\end{split}
\end{align}
where we use \eqref{eq:pq1}-\eqref{eq:pq2} and the moment bound in Assumption \ref{as:ht-noise}. 
Therefore, from \eqref{eq:hphi_1_lb} and \eqref{eq:hphi_2nd_bound}, we have for $\zeta_{k, i}^t \ne 0$, 
\begin{align}
\begin{split}
\label{eq:hpsik_lb}
    [\bH_{\Phi,i}^t]_{kk} 
    & \ge \frac{c_b\cp}{\sqrt{ 98 \cet^2 \cp^2 d L^2 + 2\sigma^2/(1 - c_b)^2 +  \tau }} \frac{1}{(t+1)^{4/5}} - \frac{2\cp \sigma }{\tau (t+1)^{1.1}}  \\
\end{split}
\end{align} 
Take $t_{0, 2}$ be the minimum $t \ge t_{0, 1}$ such that 
for $t \ge t_{0, 2}$, there exists the lower bound:
\begin{align}
\label{eq:hpsik_lb_ep}
    [\bH_{\Phi,i}^t]_{kk} \ge \frac{1}{(t+1)^{4/5}} \frac{ (1- \varepsilon) c_b\cp}{\sqrt{ 98 \cet^2 \cp^2 d L^2 + 2\sigma^2/(1 - c_b)^2 +  \tau  }} .  
\end{align}
From \eqref{eq:hphi_1_ub} and \eqref{eq:hphi_2nd_bound}, one has
\begin{align}
\label{eq:hpsik_ub}
\begin{split}
    [\bH_{\Phi, i}^t]_{kk} \le 
     \frac{\cp}{\sqrt{\tau}} \frac{1}{(t+1)^{4/5 }} + \frac{2\cp \sigma }{\tau (t+1)^{ 1.1 }} 
\end{split}
\end{align}
Let $t_{0, 3}$ be the minimum $t \ge t_{0, 2}$ such that 
for $t \ge t_{0, 3}$, there exists the upper bound: 
\begin{align}
\label{eq:hpsik_ub_ep}
    [\bH_{\Phi, i}^t]_{kk} \le \frac{ ( 1+ \varepsilon) \cp }{\sqrt{\tau} (t+1)^{4/5}}. 
\end{align}
We define $c_1, c_2$ as the lower and the upper bounds in \eqref{eq:hpsik_lb_ep} and \eqref{eq:hpsik_ub_ep}, respectively. Setting $t_0 := t_{0, 3}$ proves this lemma.
\end{proof}  

\begin{lemma}
\label{lm:cnts_bound}
Let $\bS_t = \bI - \frac{\eta_t (1-\beta_t)}{n}\sum_{i=1}^n \bH_{\Phi, i}^t \bH_{f, i}^t$. Taking proper algorithm parameters, we have, for $c_1, c_2, t_0$ as defined in Lemma \ref{lm:hphi_bound_i}, there holds for all $t \ge t_0$,
\begin{align*}
    \| \bS_t \|_2^2 \le 1 - \frac{c_\eta c_1 \mu}{t+1} + \frac{c_\eta c_1 c_\beta \mu }{(t + 1)^{3/2}} + \frac{c_\eta^2 c_2^2 L^2}{(t+1)^{2}}. 
\end{align*}
\end{lemma}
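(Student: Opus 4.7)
The plan is to write $\bS_t = \bI - \bA_t$ with $\bA_t := \tfrac{\eta_t(1-\beta_t)}{n}\sum_{i=1}^n \bH_{\Phi,i}^t\bH_{f,i}^t$ and expand $\bS_t^\top \bS_t = \bI - (\bA_t + \bA_t^\top) + \bA_t^\top \bA_t$. Subadditivity of $\lambda_{\max}$ on symmetric matrices then gives
\[
\|\bS_t\|_2^2 \;=\; \lambda_{\max}(\bS_t^\top \bS_t) \;\le\; 1 \;-\; \lambda_{\min}(\bA_t + \bA_t^\top) \;+\; \|\bA_t\|_2^2,
\]
so the task cleanly splits into an upper bound on $\|\bA_t\|_2^2$ and a matching lower bound on $\lambda_{\min}(\bA_t + \bA_t^\top)$.

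The upper bound is immediate. By Lemma~\ref{lm:hphi_bound_i}, $\|\bH_{\Phi,i}^t\|_2 \le c_2/(t+1)^{4/5}$ for $t \ge t_0$, while Assumption~\ref{as:func} gives $\|\bH_{f,i}^t\|_2 \le L$. Submultiplicativity and the triangle inequality then yield $\|\bA_t\|_2 \le \eta_t(1-\beta_t)c_2 L/(t+1)^{4/5} \le c_\eta c_2 L/(t+1)$, which produces the final term $c_\eta^2 c_2^2 L^2/(t+1)^2$ in the target inequality.

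For the lower bound I would decompose $\bH_{\Phi,i}^t = \tfrac{c_1}{(t+1)^{4/5}}\bI + \bE_{i,t}$, where $\bE_{i,t}$ is diagonal PSD and $\|\bE_{i,t}\|_2 \le (c_2 - c_1)/(t+1)^{4/5}$. Because $\bH_{f,i}^t$ is symmetric, this gives
\[
\bH_{\Phi,i}^t\bH_{f,i}^t + \bH_{f,i}^t\bH_{\Phi,i}^t \;=\; \tfrac{2c_1}{(t+1)^{4/5}}\bH_{f,i}^t \;+\; \bigl(\bE_{i,t}\bH_{f,i}^t + \bH_{f,i}^t\bE_{i,t}\bigr),
\]
where the first piece is $\succeq \tfrac{2c_1\mu}{(t+1)^{4/5}}\bI$ by $\mu$-strong convexity, and the second has spectral norm at most $\tfrac{2(c_2-c_1)L}{(t+1)^{4/5}}$. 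Under the ``proper algorithm parameters'' hypothesis — the analytical constants $\varepsilon, c_b$ in Lemma~\ref{lm:hphi_bound_i} and the hidden constant in $\tau = \Theta(d\sigma^2\kappa^{5/2})$ chosen so that the ratio $c_2/c_1$ is close enough to $1$ that $(c_2 - c_1)L \le c_1\mu/2$ — averaging over $i$ then yields $\lambda_{\min}(\bA_t + \bA_t^\top) \ge \eta_t(1-\beta_t) c_1\mu/(t+1)^{4/5}$. Substituting $\eta_t = c_\eta/(t+1)^{1/5}$ and $1-\beta_t = 1 - c_\beta/\sqrt{t+1}$ produces exactly the $-c_\eta c_1\mu/(t+1)$ and $+c_\eta c_\beta c_1\mu/(t+1)^{3/2}$ terms in the claim.

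The main obstacle is precisely this lower bound. Since $\bH_{\Phi,i}^t$ and $\bH_{f,i}^t$ do not commute, the symmetric part of their product can in general be indefinite even when both factors are strongly PD (a $2\times 2$ example with $\bH_{\Phi,i}^t$ having widely separated diagonal entries and a non-diagonal Hessian makes this explicit). The argument therefore cannot proceed by a naive ``$\bD \bH \succeq a\mu\bI$''-type inequality; it must exploit the freedom in the analysis constants so that $\bH_{\Phi,i}^t$ is close enough to a scalar multiple of $\bI$ to dominate the off-diagonal interaction term. This is what the ``proper algorithm parameters'' caveat encodes, and all remaining steps are direct arithmetic on the stepsize and weight schedules.
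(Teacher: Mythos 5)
Your proof is correct, and the skeleton (expand $\bS_t^\top\bS_t$, Weyl subadditivity, bound $\|\bA_t\|_2^2$ above and $\lambda_{\min}(\bA_t+\bA_t^\top)$ below) matches the paper exactly; the genuine difference is in how the key lower bound on $\lambda_{\min}\bigl(\bH_{\Phi,i}^t\bH_{f,i}^t+\bH_{f,i}^t\bH_{\Phi,i}^t\bigr)$ is obtained. The paper invokes Nicholson's eigenvalue theorem (Lemma~\ref{lm:ab+ba}) for $\bA\bB+\bB\bA$ and enforces the condition $(\sqrt{k_a}+1)^2-k_b(\sqrt{k_a}-1)^2\ge 2\sqrt{k_a}$, which for $k_b\le\kappa$ only requires $\sqrt{c_2/c_1}\le\frac{\kappa+\sqrt{2\kappa-1}}{\kappa-1}\approx 1+\sqrt{2/\kappa}$. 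Your perturbation argument, writing $\bH_{\Phi,i}^t$ as a scalar multiple of $\bI$ plus a small PSD diagonal remainder, is more elementary and self-contained but needs the stronger closeness condition $c_2/c_1\le 1+\mu/(2L)=1+1/(2\kappa)$. Both conditions are enforceable through $\varepsilon$, $c_b$ and $\tau$, so your proof of the lemma as stated (``taking proper algorithm parameters'') is valid; however, your condition forces $\tau=\Theta(d\sigma^2\kappa^{3})$ rather than the paper's $\Theta(d\sigma^2\kappa^{5/2})$, which would degrade the downstream rate constant to roughly $c_s=\Omega(d^{-1/2}\kappa^{-3/2})$ instead of the $\Omega(d^{-1/2}\kappa^{-5/4})$ claimed in Theorem~\ref{thm:bdd_alpha}. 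So the Nicholson route buys a quantitatively better condition-number dependence, while your route buys simplicity and avoids the external eigenvalue theorem. Your diagnosis of the obstacle --- that the symmetric part of a product of two PD matrices can be indefinite, so the near-scalar structure of $\bH_{\Phi,i}^t$ must be exploited --- is exactly the right one.
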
 

\begin{proof}
We fix some constants $0 < \varepsilon < 1, 0 < c_b < 1$ and consider $t \ge t_0$ for $t_0$ as defined in Lemma \ref{lm:hphi_bound_i},
Recall that $\bH_{\Phi, i}^t$ is diagonal and $\bH_{f, i}$ is symmetric. One has
\begin{align*}
    \bS_t^\top\bS_t 
    & = \bI -   \frac{\eta_t (1-\beta_t)}{n} \sum_{i=1}^n  \Big( \bH_{\Phi, i}^t \bH_{f, i}^t +  \bH_{f, i}^t \bH_{\Phi, i}^t \Big) \\
    & \quad + \eta_t^2 (1-\beta_t)^2  \Big( \frac{1}{n}\sum_{i=1}^n \bH_{f, i}^t \bH_{\Phi, i}^t\Big)\Big( \frac{1}{n}\sum_{i=1}^n \bH_{\Phi, i}^t \bH_{f, i}^t \Big). 
\end{align*}

Firstly, from Lemma \ref{lm:hphi_bound_i} and $L$-smoothness from Assumption \ref{as:func}, we have 
\begin{align}
\begin{split}
\label{eq:thrid-eig}
& \ \lambda_{\max} \Big(\Big( \frac{1}{n}\sum_{i=1}^n \bH_{f, i}^t \bH_{\Phi, i}^t\Big)\Big( \frac{1}{n}\sum_{i=1}^n \bH_{\Phi, i}^t \bH_{f, i}^t \Big)\Big) =  \Big\| \frac{1}{n}\sum_{i=1}^n \bH_{\Phi, i}^t \bH_{f, i}^t \Big\|_2^2 \\
\le & \ \Big( \frac{1}{n}\sum_{i=1}^n \| \bH_{\Phi, i}^t \|_2  \| \bH_{f, i}^t \|_2 \Big)^2  \le  \frac{c_2^2 L^2}{(t+1)^{8/5 }}. 
\end{split} 
\end{align}

Secondly, we bound  $\lambda_{\min} \big( (1/n) \sum_{i=1}^n (\bH_{f, i}^t \bH_{\Phi, i}^t + \bH_{\Phi, i}^t \bH_{f,i}^t) \big)$. Using notations from  Lemma \ref{lm:ab+ba}, let $\bA = \bH_{\Phi, i}^t, \bB = \bH_{f, i}^t$ and $\bC = \bA \bB$. Then,
\begin{align}
\label{eq:kakb_bounds}
    1 \le k_a \le \frac{c_2}{c_1}, \ 1 \le k_b \le \kappa, 
\end{align}
where $c_1, c_2$ are defined in Lemma \ref{lm:hphi_bound_i}. We next choose constants $\varepsilon, c_b, \tau$ such that
\begin{align}
\label{eq:pos_right}
    \big(\sqrt{k_a} + 1 \big)^2 - k_b \big(\sqrt{k_a} - 1\big)^2 \ge 2\sqrt{k_a}.  
\end{align}  
Note that when $\kappa = 1$, one has $k_b = 1$ and thus \eqref{eq:pos_right} always holds true. When $\kappa > 1$, to enforce \eqref{eq:pos_right}, it suffices to enforce \eqref{eq:pos_right} when $k_b = \kappa$, i.e., 
\begin{align}
\label{eq:sqrtka_bd}
    \frac{\kappa - \sqrt{2\kappa -1}}{\kappa - 1} \underset{ }{\le} \sqrt{k_a}  \underset{}\le \frac{\kappa + \sqrt{2\kappa -1}}{\kappa - 1}.
\end{align}
For the first part in \eqref{eq:sqrtka_bd}, since $\kappa \ge \sqrt{2\kappa - 1}> 1$, naturally, $ 0 \le \frac{\kappa - \sqrt{2\kappa -1}}{\kappa - 1} < 1 \le \sqrt{k_a}. $
We next ensure the second part of \eqref{eq:sqrtka_bd}. The following inequalities hold: 
\begin{align}
    & \sqrt{k_a} \le  \sqrt{\frac{c_2}{c_1}} 
     = \sqrt{\frac{1 + \varepsilon}{1 - \varepsilon}} \cdot \sqrt{\frac{1}{c_b}} \cdot \Big[ \frac{ 98 \cet^2 \cp^2 d L^2}{\tau} + \frac{2\sigma^2}{\tau (1 - c_b)^2} +  1 \Big]^{1/4}, \label{eq:ka_ub} \\ 
     & \Big(1 +  \frac{1}{\sqrt{ 2(\kappa - 1)}} \Big)^2  \le  \frac{\kappa + \sqrt{2\kappa -1}}{\kappa - 1}.  \label{eq:kappat_lb}
\end{align}
For any constant $\phi > 0$, we define 
\begin{align}
\label{eq:tidkap}
    \kaph := 1 + \min \Big(\frac{1}{\sqrt{2(\kappa - 1)}}, \  \phi \Big) \le 1 +  \frac{1}{\sqrt{ 2(\kappa - 1)}} . 
\end{align}
With \eqref{eq:ka_ub}\eqref{eq:kappat_lb}\eqref{eq:tidkap}, to ensure the second part of \eqref{eq:sqrtka_bd}, it suffices to take
\begin{align}
& \frac{1 + \varepsilon}{1 - \varepsilon} = \kaph^{ 5/2 }, \quad
\frac{1}{c_b}  = \kaph, \\
& \frac{ 98 \cet^2 \cp^2 d L^2}{\tau} + \frac{2\sigma^2}{\tau (1 - c_b)^2} +  1 =  \kaph, \label{eq:para-setup}
\end{align}
which are satisfied by choosing
\begin{align}
\label{eq:algparas}
\begin{split}
    \varepsilon  = \frac{\kaph^{ 5/2 } - 1}{\kaph^{ 5/2 } + 1}, \ c_b  = \frac{1}{\kaph}, \ \tau = \frac{1}{\kaph - 1} \Big[  98 \cet^2 \cp^2 d L^2 + \frac{2\sigma^2}{(1 - c_b)^2}     \Big]. 
\end{split}
\end{align}
We have shown that \eqref{eq:pos_right} can be enforced by using the above parameters. Then, in view of \eqref{eq:cnbd} and by Lemma \ref{lm:hphi_bound_i}, we obtain $\lambda_{\min}(\bH_{f, i}^t \bH_{\Phi, i}^t + \bH_{\Phi, i}^t \bH_{f,i}^t) \ge  \frac{c_1 \mu }{(t+1)^{4/5}}.$
Since $\bH_{f, i}^t \bH_{\Phi, i}^t + \bH_{\Phi, i}^t \bH_{f,i}^t$ is real symmetric, by Weyl's theorem \cite{horn2012matrix}, we have
\begin{align*}
    \ \lambda_{\min}\Big( \frac{1}{n} \sum_{i=1}^n (\bH_{f, i}^t \bH_{\Phi, i}^t + \bH_{\Phi, i}^t \bH_{f,i}^t) \Big) 
    & \ge   \frac{1}{n}\sum_{i=1}^n \lambda_{\min} (\bH_{f, i}^t \bH_{\Phi, i}^t + \bH_{\Phi, i}^t \bH_{f,i}^t) \\
    & \ge  \frac{c_1 \mu }{(t+1)^{4/5}}.
\end{align*}
It follows that 
\begin{align*}
    & \lambda_{\max}\Big(\bI -  \frac{\eta_t (1-\beta_t)}{n} \sum_{i=1}^n (\bH_{f, i}^t \bH_{\Phi, i}^t + \bH_{\Phi, i}^t \bH_{f,i}^t)\Big)  \le 1 - \frac{c_\eta c_1 \mu(1 - \beta_t) }{t+1}.
\end{align*}

Thirdly, we can bound $\lambda_{\max} \big( \bS^\top \bS\big)$. Using Weyl's theorem and \eqref{eq:thrid-eig}, one has
\begin{align}
\begin{aligned}
\label{eq:rrbd}
    \lambda_{\max}(\bS^\top \bS) 
    & \le  \lambda_{\max}\Big(\bI - \frac{\eta_t (1-\beta_t)}{n} \sum_{i=1}^n  ( \bH_{\Phi, i}^t \bH_{f, i}^t + \bH_{f, i}^t \bH_{\Phi, i}^t)\Big) \\
    & \quad + \eta_t^2 (1-\beta_t)^2 \lambda_{\max} \Big (\Big( \frac{1}{n}\sum_{i=1}^n \bH_{f, i}^t \bH_{\Phi, i}^t\Big)\Big( \frac{1}{n}\sum_{i=1}^n \bH_{\Phi, i}^t \bH_{f, i}^t \Big) \Big) \\
    & \le 1 - \frac{c_\eta c_1 \mu(1 - \beta_t) }{t+1} + \frac{c_\eta^2 c_2^2 L^2}{(t+1)^{2}}. 
\end{aligned}
\end{align}
Using the fact that $\| \bS_t \|_2^2 = \lambda_{\max}(\bS_t^\top \bS_t)$ yields the required result.  
\end{proof} 

\begin{lemma}
\label{lm:ex_bound}
There exists some constant $c_e$ such that for all $t$, $\E \| \e_x^t \|^2 \le c_e$.
\end{lemma}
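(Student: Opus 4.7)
The plan is to derive a one–step recursion of the form
$\E\|\e_x^{t+1}\|^2 \le (1 - p/(t+1))\,\E\|\e_x^t\|^2 + q/(t+1)$
for some positive constants $p,q$ and all $t$ large enough, from which uniform boundedness of $\E\|\e_x^t\|^2$ follows by a standard induction, with the finitely many small-$t$ values absorbed into $c_e$.

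Starting from \eqref{eq:update_abstract}, I would first observe that $\bu_t, \bv_t, \z_t$ are $\mathcal{F}_t$-measurable, while the noise term $\br_t$ satisfies $\E[\br_t \mid \mathcal{F}_t] = 0$. Indeed, by the component-wise symmetry of the noise pdf $p$ in Assumption \ref{as:ht-noise} and the definition \eqref{eq:phimean}, we have $\E[\bPsi_t(\nabla f_i(\x_i^t) + \bxi_i^t - \m_i^t) \mid \mathcal{F}_t] = \bPhi_t(\nabla f_i(\x_i^t) - \m_i^t)$, so $\E[\bet_i^t \mid \mathcal{F}_t] = 0$. Moreover $\|\bet_i^t\|_\infty \le 2\varphi_t$ by definition of $\bPsi_t$, and the $\bet_i^t$ are conditionally independent across $i$, which gives the variance bound $\E[\|\br_t\|^2 \mid \mathcal{F}_t] \le \frac{4\eta_t^2(1-\beta_t)^2 \varphi_t^2 d}{n} = \cO\!\bigl(1/(t+1)^{7/5}\bigr)$ using $\eta_t \sim (t+1)^{-1/5}$ and $\varphi_t \sim (t+1)^{-1/2}$.

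Next, squaring the recursion and taking $\mathcal{F}_t$-conditional expectations makes the cross terms with $\br_t$ vanish, giving
$\E[\|\e_x^{t+1}\|^2 \mid \mathcal{F}_t] = \|\bu_t + \bv_t + \z_t\|^2 + \E[\|\br_t\|^2 \mid \mathcal{F}_t]$.
I would then apply Young's inequality with a time-varying parameter $\gamma_t := c_\eta c_1 \mu / (2(t+1))$:
\begin{equation*}
\|\bu_t + \bv_t + \z_t\|^2 \le (1+\gamma_t)\|\bu_t\|^2 + (1+1/\gamma_t)\|\bv_t + \z_t\|^2.
\end{equation*}
Bounding $\|\bu_t\|^2 \le \|\bS_t\|_2^2 \|\e_x^t\|^2$ via Lemma \ref{lm:cnts_bound} yields $(1+\gamma_t)\|\bS_t\|_2^2 \le 1 - c_\eta c_1 \mu/(2(t+1)) + \cO(1/(t+1)^{3/2})$ for $t \ge t_0$. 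The residual term is controlled using the intermediate lemmas: Lemma \ref{lm:mt_bound_i} gives $\|\bv_t\| = \cO(1/(t+1)^{6/5})$, and combining $L$-smoothness, the consensus bound from Lemma \ref{lm:consensus}, Lemma \ref{lm:mt_bound_i}, and Assumption \ref{as:bdd_hete} (which contributes the dominant constant piece $|\nabla f_i(\x^*)| \le c_*$) into \eqref{eq:ztdef}, together with $\|\bH_{\Phi,i}^t\|_2 \le c_2/(t+1)^{4/5}$ from Lemma \ref{lm:hphi_bound_i}, gives $\|\z_t\| = \cO(1/(t+1))$. Therefore $\|\bv_t + \z_t\|^2 = \cO(1/(t+1)^2)$, and multiplying by $(1+1/\gamma_t) = \cO(t+1)$ leaves $\cO(1/(t+1))$.

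Collecting these bounds and taking a full expectation produces, for all $t \ge t_0$,
\begin{equation*}
\E\|\e_x^{t+1}\|^2 \le \Bigl(1 - \tfrac{p}{t+1}\Bigr)\E\|\e_x^t\|^2 + \tfrac{q}{t+1}
\end{equation*}
with $p = c_\eta c_1 \mu/2$ after absorbing the $\cO(1/(t+1)^{3/2})$ and $\cO(1/(t+1)^{7/5})$ corrections into the $q/(t+1)$ term (valid by further increasing $t_0$ if needed). A straightforward induction then shows $\E\|\e_x^t\|^2 \le \max\{\E\|\e_x^{t_0}\|^2,\; q/p\}$ for all $t \ge t_0$: if $\E\|\e_x^t\|^2 \le q/p$, then $\E\|\e_x^{t+1}\|^2 \le q/p$ by direct substitution. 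The finitely many iterates $t < t_0$ are bounded deterministically using Lemmas \ref{lm:mt_bound_i} and \ref{lm:x_bound_sto}. The main obstacle is verifying that the Young's inequality parameter $\gamma_t$ can simultaneously (i) preserve a strict contraction in the $\bu_t$ term despite the higher-order correction in $\|\bS_t\|_2^2$, and (ii) keep the inflated $\bv_t + \z_t$ piece at the admissible $1/(t+1)$ scale; the rates above show this is exactly possible because the deterministic error $\|\z_t\|$ decays at the critical $1/(t+1)$ rate that matches the contraction rate of $\|\bS_t\|_2^2$.
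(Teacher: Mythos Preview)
Your proposal is correct and follows essentially the same approach as the paper: the paper also conditions on $\cF_t$, uses the zero-mean and a.s.\ boundedness of $\br_t$, applies Young's inequality with a $\Theta(1/(t+1))$ parameter, bounds $\|\bu_t\|^2$ via Lemma~\ref{lm:cnts_bound} and $\|\bv_t\|,\|\z_t\|$ via Lemmas~\ref{lm:mt_bound_i}, \ref{lm:consensus}, \ref{lm:hphi_bound_i} together with Assumption~\ref{as:bdd_hete}, and then invokes Lemma~\ref{lm:finite_bound} (which is exactly your induction). One small caveat: Assumption~\ref{as:ht-noise} does not assert independence of $\bxi_i^t$ across $i$, so the $1/n$ factor in your variance bound for $\br_t$ is not justified---but this is harmless, since the almost-sure bound $\|\br_t\|^2 \le 4d c_\eta^2 c_\varphi^2/(t+1)^{7/5}$ (as the paper uses) already gives the required $\cO(1/(t+1)^{7/5})$ rate.
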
  
\begin{proof}
We abstract all the history up to iteration $t-1$, i.e., $\{\x_0, \bxi_i^{t'} \}_{i \in [n] , 0 \le t' \le t-1}$, into a rich enough $\sigma$-algebra $\cF_t$. Then, for each $i \in [n]$, one has $\E\big[ \bet_i^t \mid \cF_t \big] = 0$. From the definition of $\Psi_t$, $\| \bet_i^t\|^2 \le 4 d \cp^2/(t + 1)$ a.s.. Taking the conditional expectation on the square of \eqref{eq:update_abstract}, using preceding relations and Jensen's inequality, we obtain
\begin{align}
\label{eq:uvzdecom}
    \E \big[ \| \e_x^{t+1}\|^2 \mid \cF_t \big] \le \| \bu_t + \bv_t + \z_t \|^2 +  \| \br_t \|^2 \text{ and } \| \br_t \|^2 \le \frac{4d c_\eta^2\cp^2}{(t+1)^{7/5}}. 
\end{align}
By Young's inequality, take any constant $0 < \delta_1 < c_\eta c_1 \mu$, 
\begin{align}
\begin{split}
\label{eq:uvzsum}
\| \bu_t + \bv_t + \z_t \|^2 \le \big(1 + \frac{\delta_1}{t+1}\big)\| \bu_t \|^2 + \big(1 + \frac{t+1}{\delta_1 }\big)(2 \| \bv_t \|^2 + 2 \| \z_t \|^2 ). 
\end{split}
 \end{align}
We consider $t \ge t_0$ for $t_0$ defined in Lemma \ref{lm:hphi_bound_i}. Then, from Lemma \ref{lm:cnts_bound} one can find some minimum $t_1 \ge t_0$ so the following holds: 
\begin{align}
\label{eq:cascs}
\begin{split}
    \big(1 + \frac{\delta_1}{t+1}\big)\|\bu_t\|^2 & \le \big(1 + \frac{\delta_1}{t+1}\big) \| \bS_t \|_2^2 \| \e_x^t \|^2 \\
    & \le \Big( 1 - \frac{c_\eta c_1 \mu - \delta_1}{t + 1} + o\big(\frac{1}{t+1}\big) \Big)\| \e_x^t \|^2 \\
    & \le \Big[ 1 - \frac{c_\eta c_1 \mu - \delta_1}{2(t + 1)} \Big] \|\e_x^t\|^2. 
\end{split}
\end{align}
From Lemma \ref{lm:mt_bound_i} we have $\| \m_i^t \| \le 2 \sqrt{2d}\cp  / \sqrt{t + 1} $, then
\begin{align}
\label{eq:vtcntr}
    2\big(1 + \frac{t+1}{\delta_1} \big) \| \bv_t \|^2 \le \frac{16 (1 + 1/\delta_1) d c_\eta^2 c_\beta^2 \cp^2}{(t+1)^{ 7/5 }}.
\end{align}
By Assumption \ref{as:bdd_hete},
\begin{align}
\begin{split}
\label{eq:emit_bound_4}
\| \m_i^t - \nabla f_i(\x^*)  \| & \le \sqrt{d}\big (\frac{2\sqrt{2}\cp}{\sqrt{t + 1}} + c_* \big). 
\end{split}
\end{align}
Thus, with Lemma \ref{lm:hphi_bound_i} and Lemma \ref{lm:consensus}, we have
\begin{align}
\label{eq:ztcntrbd1} 
2\Big(1 + \frac{t+1}{\delta_1} \Big) \| \z_t  \|^2 \le
\frac{2(1 + 1/\delta_1) c_\eta^2 c_2^2 }{t+1} \Big[ 2d \Big( \frac{16\cp^2}{t + 1} + 2c_*^2 \Big) + \frac{8 d n L^2 \cp^2 \cet^2 c_{\lambda} ^2 }{(t+1)^{ 7/5 }} \Big]. 
\end{align}
Putting the relations \eqref{eq:uvzdecom}-\eqref{eq:ztcntrbd1} together, we obtain that for $t \ge t_1$,
\begin{align*} 
& \ \E \big[ \| \e_x^{t+1}\|^2 \mid \cF_t \big] \\
\le & \  \Big[1 - \frac{c_\eta c_1 \mu - \delta_1}{2(t+1)} \Big] \| \e_x^t \|^2 + \frac{8d(1 + 1/\delta_1) c_\eta^2 c_2^2 c_*^2}{t + 1}  + \frac{16 d(1 + 1/\delta_1) c_\eta^2 c_\beta^2 \cp^2 + 4dc_\eta^2 \cp^2}{(t+1)^{ 7/5 }} 
\\ & \qquad +  \frac{64d(1 + 1/\delta_1) c_\eta^2 c_2^2\cp^2}{(t+1)^2} + \frac{ 16 (1 + 1/\delta_1) d n L^2 c_2^2 \cp^2 c_{\lambda} ^2 c_\eta^4  }{(t + 1)^{12/5}}. 
\end{align*}
Taking the unconditional expectation on both sides of the above relation and applying Lemma \ref{lm:finite_bound} in Appendix leads to that $\E \| \e_x^t \|^2$ is upper bounded by some constant $c_e$.
\end{proof}

We next derive a tighter bound for $\E\|\z_t\|^2$ utilizing the boundedness of $\E \| \e_x^{t} \|^2$. 
\begin{lemma}
\label{lm:ezbound}
There exists some constant $c_z$ such that for all $t$ it satisfies that $ \E \big[ \| \z_t \|^2 \big] \le c_z/(t+1)^{12/5}$. 
\end{lemma}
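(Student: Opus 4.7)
The bounds $\|\bH_{\Phi,i}^t\|_2 \le c_2/(t+1)^{4/5}$ from Lemma \ref{lm:hphi_bound_i} and $\eta_t = O(1/(t+1)^{1/5})$, plugged into $\z_t$ directly, would only yield $\|\z_t\|^2 = O(1/(t+1)^2)$, falling short of the target $(t+1)^{-12/5}$ by a factor of $(t+1)^{2/5}$. The plan is to recover the missing decay by exploiting the cancellation $\sum_{i=1}^n \nabla f_i(\x^*) = 0$, enabled by Assumption \ref{as:bdd_hete}, together with a pointwise sharpening of $\bH_{\Phi,i}^t$ relative to a common reference that ultimately uses the boundedness $\E\|\e_x^t\|^2 \le c_e$ from Lemma \ref{lm:ex_bound}.

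First, I would split $\z_t$ into three pieces,
\begin{align*}
\z_t = \underbrace{-\tfrac{\eta_t(1-\beta_t)}{n}\sum_i \bH_{\Phi,i}^t(\nabla f_i(\x_i^t)-\nabla f_i(\ox^t))}_{Z_t^A} + \underbrace{\tfrac{\eta_t(1-\beta_t)}{n}\sum_i \bH_{\Phi,i}^t \m_i^t}_{Z_t^B} - \underbrace{\tfrac{\eta_t(1-\beta_t)}{n}\sum_i \bH_{\Phi,i}^t \nabla f_i(\x^*)}_{Z_t^C}.
\end{align*}
The consensus piece $Z_t^A$ is handled by $L$-smoothness and Lemma \ref{lm:consensus}, yielding $\|Z_t^A\|^2 = O(1/(t+1)^{17/5})$; the momentum piece $Z_t^B$ is handled by $\|\m_i^t\| = O(\sqrt{d}/\sqrt{t+1})$ from Lemma \ref{lm:mt_bound_i}, yielding $\|Z_t^B\|^2 = O(1/(t+1)^3)$. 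Both are well below the target exponent $12/5$.

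The critical piece is $Z_t^C$. I would write $\bH_{\Phi,i}^t = \bar{\bH}^t + \tilde{\bH}_{\Phi,i}^t$ where $\bar{\bH}^t := (c_\varphi/\sqrt{\tau})(t+1)^{-4/5}\bI$ is a scalar matrix independent of $i$. Because $\sum_i \nabla f_i(\x^*) = 0$, the $\bar{\bH}^t$ component of $Z_t^C$ vanishes and only $-\tfrac{\eta_t(1-\beta_t)}{n}\sum_i \tilde{\bH}_{\Phi,i}^t \nabla f_i(\x^*)$ remains. The key technical step is to establish the pointwise refinement
\begin{align*}
|[\tilde{\bH}_{\Phi,i}^t]_{kk}| \le \frac{c_\varphi}{\tau(t+1)^{11/10}}(|\zeta_{k,i}^t| + 3\sigma), \quad \zeta_{k,i}^t := [\nabla f_i(\x_i^t) - \m_i^t]_k,
\end{align*}
which is obtained by revisiting the decomposition $[\bH_{\Phi,i}^t]_{kk} = \msf{S}_{1,k}^t + \msf{S}_{2,k}^t$ from Lemma \ref{lm:hphi_bound_i}: applying the elementary inequality $1 - 1/\sqrt{1+x^2} \le |x|/\sqrt{2}$ to the integrand of $\msf{S}_{1,k}^t$ to compare it to the scalar $\varphi_t/\sqrt{\epsilon_t}$, and reusing the estimate $|\msf{S}_{2,k}^t| = O(1/(t+1)^{11/10})$ from the same proof, together with the first-moment condition $\int |u| p(u) du \le \sigma$ of Assumption \ref{as:ht-noise}.

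To close, I would bound $\E|\zeta_{k,i}^t|^2$. Since $|\zeta_{k,i}^t| \le L\|\x_i^t - \x^*\| + c_* + \|\m_i^t\|$, and
\begin{align*}
\E\|\x_i^t - \x^*\|^2 \le 2\|\x_i^t - \ox^t\|^2 + 2\E\|\e_x^t\|^2 \le 2\|\x_i^t - \ox^t\|^2 + 2 c_e,
\end{align*}
invoking Lemma \ref{lm:ex_bound} gives $\E|\zeta_{k,i}^t|^2 = O(c_e + c_*^2)$. Combining with the pointwise refinement above yields $\E\|\tilde{\bH}_{\Phi,i}^t\|_2^2 = O(d(c_e + c_*^2 + \sigma^2)/(t+1)^{11/5})$. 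Applying Cauchy-Schwarz across $i$ together with $\|\nabla f_i(\x^*)\| \le c_*\sqrt{d}$ yields $\E\|Z_t^C\|^2 = O(1/(t+1)^{13/5})$, and summing the three contributions produces $\E\|\z_t\|^2 = O(1/(t+1)^{13/5}) \le c_z/(t+1)^{12/5}$ for some constant $c_z$. The main obstacle is the pointwise sharpening of $\bH_{\Phi,i}^t - \bar{\bH}^t$; without the extra $(t+1)^{-3/10}$ factor beyond the uniform bound of Lemma \ref{lm:hphi_bound_i}, the cancellation from Assumption \ref{as:bdd_hete} is insufficient to recover the $12/5$ exponent.
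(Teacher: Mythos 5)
Your proposal is correct and reaches the stated bound (in fact a slightly stronger one), but it takes a genuinely different route from the paper for the critical heterogeneity term. Both arguments share the same core cancellation: subtract the scalar reference matrix $\bQ^t = \frac{\cp}{\sqrt{\tau}(t+1)^{4/5}}\bI$ (your $\bar{\bH}^t$) so that $\sum_{i}\bQ^t\nabla f_i(\x^*)=\zero$, and then show the residual $\bH_{\Phi,i}^t-\bQ^t$ decays faster than the uniform $\cO((t+1)^{-4/5})$ of Lemma \ref{lm:hphi_bound_i}. Where you differ is in how that extra decay is extracted. The paper conditions on the event $\{\|\e_x^t\|\le(t+1)^{1/5}\}$, on which $|\zeta_{k,i}^t|\le c_3(t+1)^{1/5}$ deterministically, obtains the residual bound $\cO((t+1)^{-1})$ via a quadratic expansion of $1/\sqrt{1+a}$ and truncation of the noise integral, and then disposes of the complementary event with Markov's inequality and Lemma \ref{lm:ex_bound}; each of the two events contributes exactly $(t+1)^{-12/5}$. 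You instead prove an almost-sure residual bound that is \emph{linear} in $|\zeta_{k,i}^t|$ with prefactor $(t+1)^{-11/10}$, using $1-1/\sqrt{1+x^2}\le|x|/\sqrt2$ --- which correctly charges the noise only through its first absolute moment, as Assumption \ref{as:ht-noise} permits, where the more natural quadratic bound would fail for lack of a second moment --- and then take expectations directly, with $\E|\zeta_{k,i}^t|^2=\cO(1)$ supplied by Lemmas \ref{lm:ex_bound} and \ref{lm:consensus}. This avoids the indicator decomposition entirely and yields $\E\|\z_t\|^2=\cO((t+1)^{-13/5})$, strictly better than the $(t+1)^{-12/5}$ the paper obtains and the lemma requires. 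Two cosmetic points: Assumption \ref{as:bdd_hete} is an $\ell_\infty$ bound, so $\|\nabla f_i(\x^*)\|\le c_*\sqrt d$ (which you use correctly in the final Cauchy--Schwarz step); and the bounds of Lemma \ref{lm:hphi_bound_i} and your refinement are asserted only for $t\ge t_0$, so the finitely many initial iterates must be absorbed into $c_z$, as the paper also implicitly does.
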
 

\begin{proof}
By definition, 
\begin{align}
\label{eq:ezdecomp}
\E \| \z_t \|^2 = \E\big[ \| \z_t \|^2 \mathds{1}_{\| \e_x^t \| \le (t+1)^{1/5}}   \big] +  \E\big[ \| \z_t \|^2 \mathds{1}_{\| \e_x^t \| > (t+1)^{1/5}} \big].
\end{align} 
We bound the two terms on the right hand side one by one. Without loss of generality, we assume $\zeta_{k, i}^k \ne 0$. Recall that in \eqref{eq:HPhikk}, we let $[\bH_{\Phi, i}^t]_{kk} = \msf{S}_{1, k}^t + \msf{S}_{2, k}^t$. 

\textbf{Step 1}. We bound the first term in \eqref{eq:ezdecomp}. Suppose $\|\e_x^t\| \le (t+ 1)^{1/5}$. Under this condition, we can derive a tighter lower bound for $\msf{S}_{1, k}^t$ than that in \eqref{eq:hphi_1_lb}. From the upper and lower bounds of $\msf{S}_{1, k}^t$ in \eqref{eq:hphi_1_ub} and \eqref{eq:hphi_1_lb}, respectively,  it is clear that  $\msf{S}_{1, k}^t$ deviates from the upper bound $\frac{\cp}{\sqrt{\tau}(t+1)^{4/5}}$ with uncertainty in $\cO(1/(t+1)^{4/5})$. We proceed to reduce this uncertainty. 

From \eqref{eq:zeta_upperbound} we have for some constant $c_3$, 
\begin{align}
\begin{split}
\label{eq:zeta_bd_4}
    |\zeta_{k, i}^t| & \le L(t+1)^{1/5}  + c_* + \frac{2 \sqrt{2} \cp}{\sqrt{t + 1}} + \frac{2 \cp c_\eta c_{\lambda}  \sqrt{dn} L}{(t + 1)^{7/10}} \le c_3 ( t+ 1)^{1/5}. 
\end{split}
\end{align}
It follows that for sufficiently large $t$, 
\begin{align}
\label{eq:s1-tighter-lb}
\begin{split}
\msf{S}_{1, k}^t \ge & \ \int_{-\infty}^{\infty}  \frac{\cp}{ \sqrt{t + 1} } \frac{p(u)}{\sqrt{2c_3^2(t + 1)^{2/5} + 2u^2 + \epsilon_t}} du \\
\ge & \ \int_{-(t+1)^{1/5 }}^{(t+1)^{1/5}}  \frac{\cp}{ \sqrt{t + 1} } \frac{p(u)}{\sqrt{(2c_3^2 + 2) (t+1)^{ 2/5 } + \epsilon_t}} du \\
= & \ \frac{\cp}{\sqrt{\tau} (t+1)^{4/5} } \frac{1}{ \sqrt{1 + \frac{2c_3^2 + 2}{\tau(t+1)^{1/5}}} } \int_{- (t+1)^{1/5}}^{(t+1)^{1/5}} p(u) du \\
\overset{(a)}{\ge} & \ \frac{\cp }{\sqrt{\tau} (t+1)^{4/5}  } \Big[1 - \frac{2c_3^2 + 2}{\tau(t+1)^{1/5}} \Big]\Big[1 - \frac{\sigma}{(t +1)^{1/5}} \Big] \\ 
\ge & \ \frac{\cp }{\sqrt{\tau} (t+1)^{4/5} } \Big[1 - \frac{(2c_3^2 + 2)/\tau + \sigma}{(t+1)^{1/5}} \Big] 
\end{split}
\end{align}
where in step $(a)$ we take sufficiently large $t$ so that $(t+1)^{1/5} > (2c_3^2 +2)/\tau$, and use the fact that for every $0 < a < 1$, $1/\sqrt{1 + a} \ge \sqrt{1 - a} \ge 1 - a$, and also Lemma \ref{lm:findcb}. The faster decaying uncertainty of  $\msf{S}_{1,k}^t$, as shown in \eqref{eq:s1-tighter-lb}, allows us to leverage the useful equality, i.e., $\sum_{i=1}^n \nabla f_i(\x^*) = 0$, when bounding $\E\big[ \|\z_t \|^2 \big]$, as illustrated in the forthcoming analysis. 

Define the following diagonal matrices in $\R^{d \times d}$: 
\begin{align*}
    \bQ^t := \diag\big[ \frac{\cp}{\sqrt{\tau}(t+1)^{4/5}}, \ldots  \big], \ \msf{S}_1^t := \diag\big[ \msf{S}_{1, 1}^t, \ldots, \msf{S}_{1, d}^t ], \ \msf{S}_2^t := \diag\big[ \msf{S}_{2, 1}^t, \ldots, \msf{S}_{2, d}^t ].
\end{align*}
Recall the definition of $\z_t$ in \eqref{eq:ztdef} and use $\sum_{i=1}^n \nabla f_i(\x^*) = 0$, for some constant $c_{z, 0}$, 

\begin{align*}
 & \ \E \big[ \| \z_t \|^2 \mathds{1}_{\| \e_x^t \| \le (t+1)^{1/5}} \big] \\ 
\le & \  2 \Big \| \frac{\eta_t }{n}  \sum_{i=1}^n  \bH_{\Phi, i}^t (\m_i^t  - \nabla f_i(\x^*))  \Big \|^2  + 2 \Big \| \frac{\eta_t }{n}  \sum_{i=1}^n  \bH_{\Phi, i}^t (\nabla f_i(\ox^t) - \nabla f_i(\x_i^t)) \Big \|^2  \\ 
\le & \ 2 \Big\| \frac{\eta_t }{n}\sum_{i=1}^n   \big(\bQ^t - (\bQ^t - \msf{S}_1^t) + \msf{S}_2^t \big) (\m_i^t - \nabla f_i(\x^*))  \Big \|^2 + \frac{8 dnL^2 c_2^2 c_{\lambda} ^2 \cp^2 c_\eta^4}{(t+1)^{3.4}}    \\
\le & \  6 \eta_t^2 \Big[   \Big\| \frac{\bQ^t}{n} \sum_{i=1}^n \m_i^t \Big\|^2  + \Big\| \frac{1}{n} \sum_{i=1}^n  (\bQ^t - \msf{S}_1^t)  (\m_i^t - \nabla f_i(\x^*))  \Big \|^2  \\
& \qquad  + \Big\| \frac{1}{n} \sum_{i=1}^n  \msf{S}_{2}^t (\m_i^t - \nabla f_i(\x^*)) \Big \|^2   \Big] + \frac{8 dnL^2 c_2^2 c_{\lambda} ^2 \cp^2 c_\eta^4}{(t+1)^{3.4}}  \\
\overset{\eqref{eq:s1-tighter-lb}\eqref{eq:hphi_2nd_bound}}{\le} & \ 6\eta_t^2 \Big[ \frac{4 d \cp^4 }{\tau (t+1)^{2.6}  } + \frac{d \cp^2((2c_3^2 + 2)/\tau+\sigma)^2(8 \cp^2/(t + 1) + 2c_*^2)}{\tau^3 (t+1)^{2}}  \\
& \qquad + \frac{4 d\cp^2\sigma^2(8\cp^2/(t + 1) + 2c_*^2)}{\tau^2 (t+1)^{ 2.2 }} \Big] + \frac{8 dnL^2 c_2^2 c_{\lambda} ^2 \cp^2 c_\eta^4}{(t+1)^{3.4}} \\
\le & \ \frac{c_{z, 0}}{(t + 1)^{12/5}}.
\end{align*}

\textbf{Step 2}. We bound the second term in \eqref{eq:ezdecomp}. From Lemma \ref{lm:ex_bound} and using Markov inequality, one has
\begin{align}
\label{eq:markov}
    \P\big( \| \e_x^t \| \ge (t+1)^{1/5}  \big) \le  \frac{ \E \| \e_x^t \|^2 }{(t+1)^{2/5}} \le \frac{c_e}{(t+1)^{2/5}}. 
\end{align} 
Using an upper bound for $\| \z_t \|^2$ independent of $\e_x^t$, similar to \eqref{eq:ztcntrbd1}, it yields that for some constant $c_{z, 1}$, 
\begin{align}
\begin{split}
& \E \big[ \| \z_t \|^2 \mathds{1}_{\| \e_x^t \| > (t+1)^{1/5}} \big] \\
\le \ & 
\frac{ c_\eta^2 c_2^2 }{(t+1)^2} \Big[ 2d \Big( \frac{16\cp^2}{t + 1} + 2c_*^2 \Big) + \frac{8 d n L^2 \cp^2 \cet^2 c_{\lambda} ^2 }{(t+1)^{ 7/5 }} \Big] \P\big( \| \e_x^t \| \ge (t+1)^{1/5}  \big)  \\
\overset{\eqref{eq:markov}}{\le} \ &  \frac{c_{z, 1}}{( t + 1)^{12/5}}.
\end{split}
\end{align}
Combining step 1 and 2 and defining $c_z = c_{z, 0} + c_{z, 1}$ conclude the proof.
\end{proof}

\begin{proof}[Proof of Theorem \ref{thm:bdd_alpha}]
Combining \eqref{eq:uvzdecom}-\eqref{eq:vtcntr} gives that for $t \ge t_0$, 
\begin{align}
\begin{split}
\label{eq:condexp_recursion}
     & \E\big[ \| \e_x^{t+1}  \|^2 \mid \cF_t\big] \\
     \le & \  \Big( 1 - \frac{c_\eta c_1 \mu - \delta_1}{t + 1} + o\big(\frac{1}{t +1}\big) \Big)  \| \e_x^t \|^2  + \frac{16 (1 + 1/\delta_1) d c_\eta^2 c_\beta^2 \cp^2 + 4d c_\eta^2\cp^2 }{(t+1)^{ 7/5 }}  \\
    & \quad  +2\Big(1 + \frac{t+1}{\delta_1} \Big) \| \z_t  \|^2. 
\end{split}
\end{align}
Taking unconditional expectation and using Lemma \ref{lm:ezbound} leads to that there exists some constant $c_4$ such that 
\begin{align} 
\label{eq:main-recursion}
     \E\big[ \| \e_x^{t+1}  \|^2 \big]
     \le   \Big( 1 - \frac{c_\eta c_1 \mu - \delta_1}{t + 1} + o\big(\frac{1}{t+1}\big) \Big)  \E \big[ \| \e_x^t \|^2 \big] + \frac{c_4}{(t + 1)^{7/5}}. 
\end{align}
Take any $0 < \delta_2 <\min(c_\eta c_1 \mu - \delta_1, 2/5)$. Since the function $t \rightarrow (t+2)^{\delta_2}, 0 < \delta_2 \le 1$ is concave in $t$, then $(t + 2)^{\delta_2} \le (t + 1)^{\delta_2}[1 + \delta_2(t+1)^{- 1}]$. In addition, one has $(t + 2)^{\delta_2} = (1 + \frac{1}{t +1})^{\delta_2}(t + 1)^{\delta_2} \le e^{\delta_2/(t + 1)}(t+1)^{\delta_2} \le e(t + 1)^{\delta_2}$. Let $v^t = (t + 1)^{\delta_2} \| \e_x^t \|^2$, using the preceding inequalities on \eqref{eq:condexp_recursion}, we obtain 
\begin{align*}
    \E [ v^{t + 1} ] \le \Big(1 - \frac{c_\eta c_1 \mu - \delta_1 - \delta_2  }{t + 1} + o \big( \frac{1}{t + 1} \big) \Big) \E \big[ v^t \big] + \frac{e c_4}{(t+1)^{7/5 - \delta_2}}. 
\end{align*}
Applying Lemma \ref{lm:cvg_diff_main} on the above relation leads to that 
\begin{align}
\label{eq:net-avg-cvg}
    \lim_{t \rightarrow \infty} (t + 1)^{\delta_2}  \E \big[ \| \e_x^t \|^2 \big] = 0. 
\end{align}
Since $\delta_1$ is arbitrary in $(0, c_\eta c_1 \mu)$, \eqref{eq:net-avg-cvg} holds true for any $\delta_2 < \min(c_\eta c_1\mu, 2/5)$. 
For each $i \in [n]$, by Young's inequality, 
\begin{align*}
    \| \x_i^t - \x^* \|^2 \le  2\| \x_i^t - \ox^t \|^2 + 2 \| \ox^t - \x^* \|^2. 
\end{align*}
From Lemma \ref{lm:consensus}, we have $\| \x_i^t - \ox^t \|^2 = \cO((t+1)^{-7/5})$, which decays faster than $\cO(t^{-2/5})$. Thus, by defining $c_s := c_\eta c_1 \mu$, we have $\forall \delta < \min(c_s, \ 2/5)$, 
\begin{align*}
    \lim_{t \rightarrow \infty} ( t + 1)^\delta \E \big[ \|\x_i^t - \x^* \|^2] = 0. 
\end{align*}

\textit{Lower bounds for $c_s$.} Recall that $c_1$ defined in Lemma \ref{lm:hphi_bound_i} is dependent on the choices of the analytical constants $0 < c_b < 1, \ 0 < \varepsilon < 1$, and algorithm parameters $c_\eta, c_\varphi, \tau$, i.e.,
\begin{align}
\begin{split}
\label{eq:cs_def}
    c_s  = c_\eta c_1 \mu = \frac{(1 - \varepsilon) c_b } {\sqrt{ 98 d \kappa^2 +  \frac{1}{\cp^2 \cet^2 \mu^2 } \big( \frac{2\sigma^2}{(1 - c_b)^2} +  \tau \big)  }}  . 
\end{split}
\end{align}
If $\kappa = 1$, we are free to choose analytical constants $c_b, \varepsilon$, and algorithm parameters. We take $c_b = 9/10 , \ \varepsilon = 1/9, \ c_\eta^2 \cp^2 = 200\sigma^2/\mu^2, \tau = 200 \sigma^2$, then one has $c_s \ge 2/(25\sqrt{d})$ for $\kappa =1 $. 
For the case $\kappa > 1$, we define $\kaph = 1 + \min \big(\phi, \frac{1}{\sqrt{2(\kappa  -1)}} \big)$ for constant $\phi > 0$. Using $\varepsilon, c_b, \tau$ as defined in \eqref{eq:algparas} and substituting them into \eqref{eq:cs_def} lead to 
\begin{align}
\begin{split}
\label{eq:cs_lb_kg1}
    c_s & = \frac{2\sqrt{\kaph - 1}}{ \kaph^{ 3/2 } ( \kaph^{ 5/2 } + 1)  }   \frac{1}{ \sqrt{ 98 d \kappa^2 + \frac{1}{c_\eta^2 \cp^2 \mu^2}  \frac{2\sigma^2 \kaph^2}{(\kaph - 1)^2 }  } } \\
    & \ge \frac{1}{ (1 + \phi)^4}  
   \frac{1}{ \sqrt{ 
   \frac{98 d \kappa^2}{\kaph  - 1 } + \frac{2\sigma^2}{ c_\eta^2 \cp^2 \mu^2 } \frac{\kaph^{2} }{(\kaph - 1)^{3}}}},
\end{split}
\end{align} 
where we used $\kaph \le 1 + \phi$.  We take $\phi = \big(1.25\big)^{ 1/4 } - 1,  \ c_\eta^2 c_\varphi^2 = 3  \sigma^2/\mu^2$, Then, it follows that
\begin{align*}  
    c_s 
    & \ge \frac{4}{ 5 } \frac{1}{ \sqrt{ 
   \frac{98 d \kappa^2}{\kaph  - 1 } + \frac{1}{(\kaph - 1)^{ 3 }}}} \\
 & \ge \begin{cases}
    & \frac{4}{5}\frac{1}{\sqrt{98d \kappa^2 \sqrt{2(\kappa - 1)} + [2(\kappa - 1)]^{ 3/2 } } }, \text{ when } \phi > \frac{1}{\sqrt{2 (\kappa - 1) }}, \text{ i.e., } \kappa \ge 153,   \\
    & \frac{4}{5} \frac{1}{\sqrt{ \frac{98 d\kappa^2}{\phi} + \frac{1}{\phi^{ 3 }} }} \text{ otherwise, }
 \end{cases} \\ 
 & = \Omega \Big( \frac{1}{ d^{ 1/2 } \kappa ^{ 5/4} } \Big). 
\end{align*}  
Further, $\tau = \frac{1}{\kaph - 1} [  98 \cet^2 \cp^2 d L^2 + \frac{2\sigma^2}{(1 - c_b)^2}  ] =  \sigma^2[ \frac{294 d \kappa^2}{\kaph  -1 } + \frac{2 \kaph^2}{(\kaph - 1)^3}] = \Theta(d \sigma^2 \kappa^{ 5/2 })$. Thus, the proof is complete.
\end{proof}  
\section{Numerical experiments}
\label{sec:experiments} 
In the decentralized network setup, we implemented DSGD \cite{nedic2009distributed} (in the form of local update $\x^{t+1}_i = \sum_{j = 1}^n w_{ij} ( \x_j^t - \eta_t g_j(\x_j^t))$), Network-CClip (as in \eqref{eq:network_clippings}), Network-GClip (as in \eqref{eq:network_clippings}), and \scen. For the server-client setup (i.e., the fully connected network), we implemented SGD, Dist-CClip, Dist-GClip, Prox-clipped-SGD-shift \cite{gorbunovhigh}, and \sce. For SClip-EF-Network and \sce, we tuned four hyperparameters: $\cp, \tau, c_\beta$, and $c_\eta$, as described in \eqref{eq:sce-hypara}. For Prox-clipped-SGD-shift, we tuned two constant step sizes and an exponentially decaying (or constant, if more effective) clipping threshold. For all other methods, we adjusted the decay factor $a$ for the decaying step size $a/(t+1)$ and the constant clipping threshold. All algorithms were initialized from $\zero$, fine-tuned using grid search, and the average optimality gap from multiple independent runs is presented.

We solve a synthetic quadratic minimization problem $\min_{\x \in \R^d} \sum_{i=1}^n f_i$, where each $f_i$ is private to node $i$ and defined as $f_i \triangleq (1/2)\x^\top \bA_i \x + \bb_i^{\top} \x$. We set $n = 20$ and $d = 10$. For every $i$, components of $\mathbf{A}_i \in \mathbb{R}^{d \times d}$ and $\mathbf{b}_i \in \mathbb{R}^d$ are uniformly sampled from $[-1, 1]$, and we set $\bA_i$ symmetric and add $0.01$ to its diagonals to ensure positive definiteness. Each node $i$ has access to the stochastic gradients of $f_i$ and gradient noises are i.i.d. sampled from distribution \eqref{eq:ht-example}. We discretize the cumulative distribution function (c.d.f) of \eqref{eq:ht-example} in the truncation range of $[-100, 100]$ and use inverse transform sampling to sample noises. Exact noise sampling from \eqref{eq:ht-example} would not guarantee convergence of other distributed algorithms such as \pcss, since the distribution lacks any $\alpha$ moment for $\alpha > 1$. With truncation, moments of all orders exist, allowing us to compare the convergence rates of these algorithms. We test SClip-EF-Network in a connected cycle of 20 nodes with a degree of 5 and SClip-EF in the server-client setup. As shown in Figure \ref{fig:synthetic_exp}, even with a truncated version of heavy-tailed noise, SClip-EF-Network and SClip-EF converge among the fastest.
\begin{figure}[htbp]
    \centering 
    \begin{subfigure}[b]{0.328\textwidth}
        \centering
        \includegraphics[width=\textwidth]{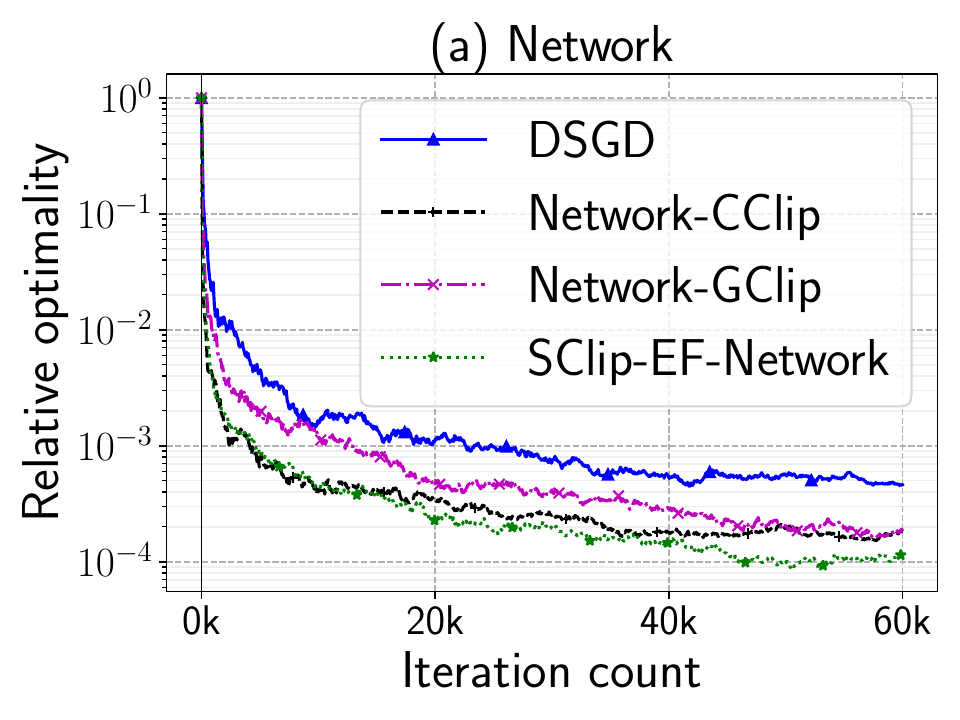}
    \end{subfigure}
    \hfill
    \begin{subfigure}[b]{0.328\textwidth}
        \centering \includegraphics[width=\textwidth]{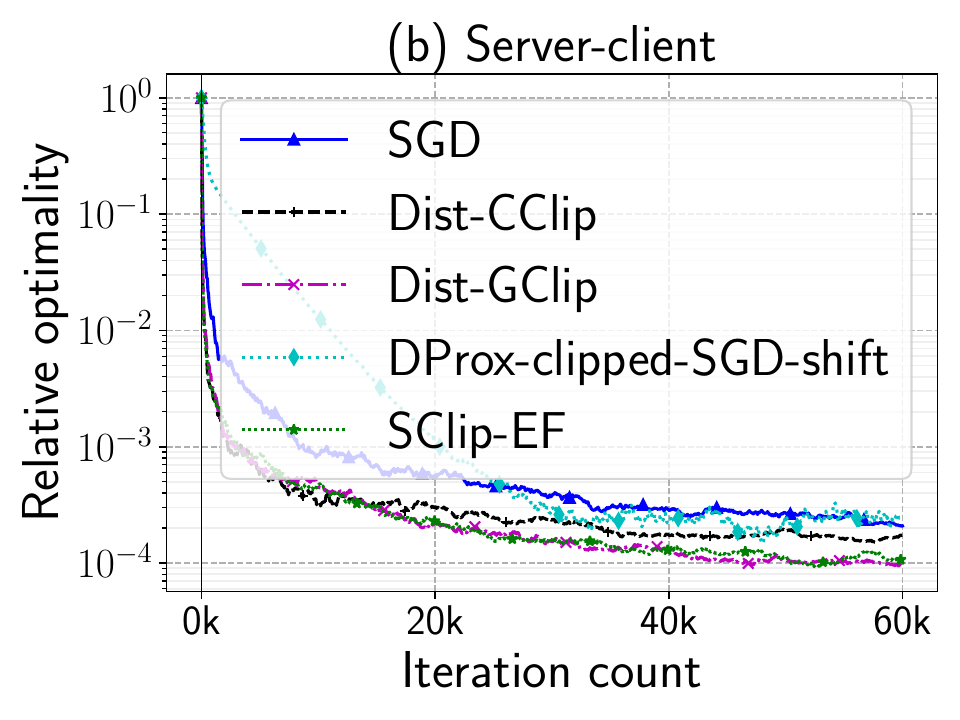}
    \end{subfigure}
    \begin{subfigure}[b]{0.328\textwidth}
    \centering
    \includegraphics[width=\textwidth]{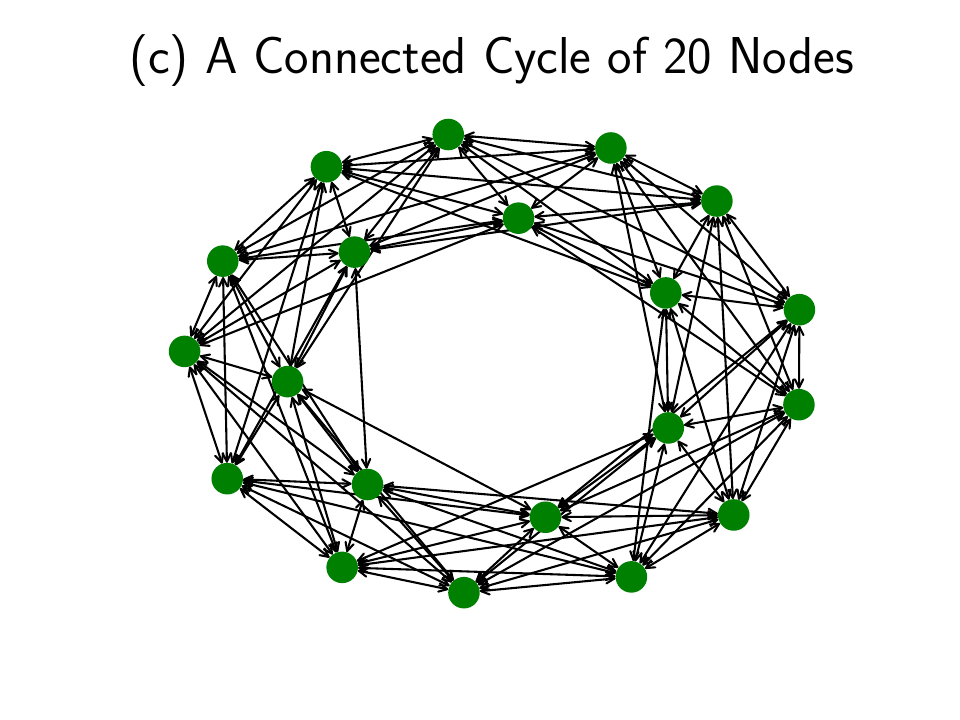}
    \end{subfigure}
    \caption{Average relative optimality $\log_{10} ( f(\x^t) - f(\x^*)) / (f(\x^0) - f(\x^*))$ out of 10 runs in network and server-client cases, and network graph, from left to right.} 
    \label{fig:synthetic_exp}
\end{figure}
\section{Conclusion}
\label{sec:con}
In this work, we have studied strongly convex and smooth heterogeneous decentralized optimization under symmetric heavy-tailed gradient noise. We have proposed SClip-EF and SClip-EF-Network, two distributed methods that integrate a smooth component-wise clipping operator with decaying clipping thresholds and a weighted feedback mechanism to cope with heavy-tailed stochastic gradients and node heterogeneity. The crux of our approach lies in clipping certain gradient estimation errors rather than the stochastic gradients directly.  Under the mild assumption that the gradient noise has only bounded first absolute moment and is (component-wise) symmetric about zero, we have shown that, for the first time in the heterogeneous decentralized case, SClip-EF-Network converges to the optimum with a guaranteed sublinear rate. The effectiveness of the proposed algorithms are supported by numerical examples. Future directions include extending the currently analysis to non-convex functions.  
\section{Appendix}  

\subsection{Proof of Proposition \ref{prop:example}} 
\label{sec:example}
\textbf{Part I}. We first show that it is a \textit{heavy-tailed} distribution. For any positive  $x > 0$, we have $\P(u > x) = \int_x^\infty  \frac{c_p}{(u^2 + 2) \ln^2(u^2 + 2)} du.$
Let $w = \ln (u^2 + 2)$, then $u = \sqrt{e^w - 2}$ and $dw = (2u)du/(u^2 + 2)$. Then, 
\begin{align}
\label{eq:tail-lb}
    \P(u > x) = \int_{\ln(x^2 + 2)}^\infty \frac{c_p}{ 2u w^2} dw = \int_{\ln(x^2 + 2)}^\infty \frac{c_p}{2 w^2\sqrt{e^w - 2}} dw \ge  \int_{\ln(x^2 + 2)}^\infty \frac{c_p}{2 w^2 e^{w/2} } dw. 
\end{align}
We next bound the rightmost term. Consider any constant $a > 0$, 
\begin{align*}
    \int_a^{\infty} \frac{1}{w^2 e^{w/2}} dw = \frac{e^{-a/2}}{a} - \frac{\Gamma(0, \frac{a}{2})}{2}. 
\end{align*}
Using the Theorem 1.1 in \cite{pinelis2020exact}, we have $ \Gamma(0, \frac{a}{2}) < e^{-\frac{a}{2}}\ln\big(1 + \frac{2}{a}\big)$. 
It follows that 
\begin{align*}
    \int_a^{\infty} \frac{1}{w^2 e^{w/2}} dw  >  \frac{1}{2}e^{-\frac{a}{2}}\big[\frac{2}{a} - \ln \big(1 + \frac{2}{a}\big)\big].
\end{align*}
Note that for any $0 < s < \frac{3}{4}$, using Newton-Mercator series we have
\begin{align*}
    s - \ln(1 + s) & = \frac{s^2}{2} - \frac{s^3}{3} + \frac{s^4}{4} - \frac{s^5}{5} + \frac{s^6}{6} - \frac{s^7}{7} \ldots \\
    & =  \frac{s^2}{2}\Big(1 - \frac{2s}{3} \Big) + \frac{s^4}{4}\Big(1 - \frac{4s}{5}\Big) + \frac{s^6}{6}\Big(1 - \frac{6s}{7}\Big) + \ldots \ge \frac{s^2}{4} 
\end{align*}
In the case that $a > \frac{8}{3}$, from the above two relations we have $\int_a^{\infty} \frac{1}{w^2 e^{w/2}} dw > \frac{1}{2 a^2 e^{a/2}}$. 
Combing the above relation with \eqref{eq:tail-lb} gives that when $\ln(x^2 + 2) > \frac{8}{3}$,
\begin{align*}
\P(u > x) > \frac{c_p}{4 \sqrt{x^2 + 2} \ln^2(x^2 + 2) } \ge \frac{c_p}{4\sqrt{x^2 + 2}(x^2 + 1)^2} \ge \frac{c_p}{4}(x^2+2)^{-\frac{5}{2}}. 
\end{align*}
Then it follows that for any $r > 0$, we have 
\begin{align*}
    \lim_{x \rightarrow \infty} e^{rx} \P(u > x) > \lim_{x \rightarrow \infty} \frac{c_p}{4}(x^2+2)^{-\frac{5}{2}}e^{rx} = \infty. 
\end{align*}

\textbf{Part II.} We next show that the distribution as in \eqref{eq:ht-example} has finite first absolute moment. Again, let $w = \ln(u^2 + 2)$, we have 
\begin{align*}
    \int_{-\infty}^{\infty} |u| p(u) du = 2\int_{0}^\infty \frac{uc_p}{(u^2 + 2) \ln^2(u^2 + 2)} du = \int_{\ln 2}^\infty \frac{c_p}{w^2} dw = \frac{c_p}{\ln 2}. 
\end{align*}

\textbf{Part III.} For any $\alpha > 1$, 
\begin{align*}
    \int_{-\infty}^\infty |u|^{\alpha} p(u) du 
    & = 2\int_{0}^\infty \frac{u^{\alpha -1 } u c_p }{(u^2 + 2)\ln^2 (u^2 +2 )} du  = \int_{\ln 2}^{\infty} \frac{(e^w - 2)^{(\alpha - 1)/2} c_p}{w^2} dw \\
    & > \int_{2 \ln 2}^\infty \frac{(\frac{e^{w}}{2})^{(\alpha-1)/2} c_p}{w^2} dw \\
    & > \frac{c_p}{2^{(\alpha-1)/2}} \int_{2 \ln 2}^\infty \frac{1 + w(\alpha -1)/2}{w^2} dw \\
    & > \frac{c_p(\alpha-1)}{2^{(\alpha+1)/2}} \int_{2 \ln 2}^{\infty} \frac{1}{w} dw  = \infty. 
\end{align*}
Thus, any moment $\alpha > 1$ does not exist. 

\subsection{Technical Lemmas}
\begin{lemma} 
\label{lm:cvg_series}
For any constant $\gamma \in (0, 1)$ and $\nu \in (0, 1)$, there exists some constant $c_{\gamma, \nu}$, depending on $\gamma$ and $\nu$, such that for any $t \ge 1$, the following inequality holds:
\begin{align}
\label{eq:cvg_series}
    \sum_{k = 0}^{t-1} \frac{\gamma^{t-k}}{(k + 1)^\nu} \le \frac{c_{\gamma, \nu}}{(t+1)^\nu}. 
\end{align}
\end{lemma}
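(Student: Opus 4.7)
The plan is to decompose the sum at the midpoint $k \approx t/2$ and exploit the fact that the geometric factor $\gamma^{t-k}$ decays exponentially while the polynomial factor $(k+1)^{-\nu}$ only decays at a sub-linear rate. The intuition is that for $k$ far from $t$, the exponential factor kills the sum outright, while for $k$ near $t$, the polynomial factor is already comparable to $(t+1)^{-\nu}$ and the remaining geometric series is summable.

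More precisely, I would split
\begin{align*}
\sum_{k=0}^{t-1} \frac{\gamma^{t-k}}{(k+1)^\nu} = \underbrace{\sum_{k=0}^{\lfloor t/2 \rfloor} \frac{\gamma^{t-k}}{(k+1)^\nu}}_{S_1} + \underbrace{\sum_{k=\lfloor t/2 \rfloor + 1}^{t-1} \frac{\gamma^{t-k}}{(k+1)^\nu}}_{S_2}.
\end{align*}
For $S_1$, since $t - k \ge t/2$ we have $\gamma^{t-k} \le \gamma^{t/2}$, and the polynomial sum is bounded as $\sum_{k=0}^{\lfloor t/2 \rfloor} (k+1)^{-\nu} \le \int_0^{t/2+1} s^{-\nu}\, ds = (t/2+1)^{1-\nu}/(1-\nu)$. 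Hence $S_1 \le C_1 \gamma^{t/2} t^{1-\nu}$, which is $o((t+1)^{-\nu})$ since exponential decay beats any polynomial growth. Therefore there exists $c_1 = c_1(\gamma,\nu)$ such that $S_1 \le c_1 (t+1)^{-\nu}$ for all $t \ge 1$.

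For $S_2$, each index satisfies $k+1 \ge t/2 + 1 \ge (t+1)/3$ (or a similar universal bound), so $(k+1)^{-\nu} \le 3^\nu (t+1)^{-\nu}$. The remaining geometric factor satisfies $\sum_{k=\lfloor t/2 \rfloor + 1}^{t-1} \gamma^{t-k} \le \sum_{j=1}^\infty \gamma^j = \gamma/(1-\gamma)$. Combining gives $S_2 \le \frac{3^\nu \gamma}{(1-\gamma)(t+1)^\nu}$. Adding the two bounds and choosing $c_{\gamma,\nu}$ as the sum of the resulting constants yields the claim.

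The main obstacle is minor and merely bookkeeping: the first-part bound uses that $\gamma^{t/2} t^{1-\nu}(t+1)^\nu = \gamma^{t/2} t^{1-\nu}(t+1)^\nu$ is bounded uniformly in $t \ge 1$, which follows immediately from $\lim_{t\to\infty} \gamma^{t/2} t = 0$; for small $t$ the finitely many terms are absorbed into $c_{\gamma,\nu}$. No probabilistic or analytic subtleties arise, so this is a purely computational lemma that serves as a black box in the consensus bound (Lemma~\ref{lm:consensus}).
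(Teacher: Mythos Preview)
Your proof is correct and uses the same midpoint split as the paper. After the change of index $j=t-k$, your $S_2$ corresponds to the paper's ``first half'' and your $S_1$ to its ``second half''; the paper also handles the latter by the crude bound $\gamma^{t/2}\cdot(\text{polynomial in }t)=O(1)$, exactly as you do. The only substantive difference is in the other piece: the paper rewrites the sum as $\sum_{k=1}^t \gamma^k\bigl(\tfrac{t+1}{t+1-k}\bigr)^\nu$, applies a Taylor expansion of $\ln(1-\tfrac{k}{t+1})$ to bound $\bigl(\tfrac{t+1}{t+1-k}\bigr)^\nu\le e^{1.5k\nu/(t+1)}$, and then (for $t$ large enough that $1.5\nu/(t+1)<\tfrac12\ln(1/\gamma)$) sums a geometric series with ratio $\sqrt{\gamma}$, treating small $t$ separately via a constant $c_0$. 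You instead observe directly that $k+1\ge (t+1)/3$ in this range, pull out the factor $3^\nu(t+1)^{-\nu}$, and sum the remaining geometric series in one line. Your route is shorter and avoids the Taylor bookkeeping; the paper's route gives a slightly more explicit constant $\sqrt{\gamma}/(1-\sqrt{\gamma})$ for that half, but since both proofs ultimately absorb small-$t$ terms into $c_{\gamma,\nu}$ anyway, this buys nothing extra.
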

\begin{proof}
Note that \eqref{eq:cvg_series} is equivalent to that $\forall t \ge 1$, 
\begin{align}
\label{eq:bounded_sum}
    \sum_{k = 1}^{t} \gamma^k \Big(\frac{t+1}{t + 1 - k}\Big)^\nu \le c_{\gamma, \nu}. 
\end{align} 
We focus on the case that $t > \frac{3\nu}{\ln (1/\gamma)} - 1$, and for the finite complementary case, we define
\begin{align*}
    c_0 := \sup_{1 \le t \le \max\big(\lfloor \frac{3\nu}{ \ln (1/\gamma)} - 1 \rfloor, \ 1\big)} \sum_{k = 1}^{t} \gamma^k \Big(\frac{t+1}{t + 1 - k}\Big)^\nu. 
\end{align*}
Now, for $t > \frac{3\nu}{\ln (1/\gamma)} - 1$, we first consider the first half of the summation in \eqref{eq:bounded_sum}, i.e.,  $k = 0, 1, \ldots, \lfloor t/2 \rfloor$. We have 
\begin{align*}
    \Big(\frac{t+1}{t + 1 - k}\Big)^\nu = \exp\Big( - \nu \ln \big(1 - \frac{k}{t+1}\big) \Big). 
\end{align*} 
Using Taylor series, 
\begin{align*}
     -\ln \big(1 - \frac{k}{t+1}\big) 
    \le \ & \frac{k}{t+1} + \frac{1}{2} \Big( \frac{k}{t+1}\Big)\Big[ \frac{k}{t+1} + \Big( \frac{k}{t+1}\Big)^2 + \Big( \frac{k}{t+1}\Big)^3 +   \ldots \Big] \\
    = \  & \frac{k}{t+1} + \frac{1}{2} \Big( \frac{k}{t+1}\Big) \frac{1}{(t+1)/k - 1} \le  \frac{1.5k}{t+1},  
\end{align*}
where in the last step we used $(t+1)/k \ge 2$. Then, combing the above two relations yields that
\begin{align*}
    \sum_{k = 1}^{\lfloor t/2 \rfloor} \gamma^k \Big(\frac{t+1}{t + 1 - k}\Big)^\nu 
    & \le  \sum_{k = 1}^{\lfloor t/2 \rfloor} \gamma^k \exp \Big(\frac{1.5 k \nu}{t + 1}\Big)  = \sum_{k = 1}^{\lfloor t/2 \rfloor} \exp \Big(\frac{1.5 k \nu}{t + 1} + k\ln \gamma\Big). 
\end{align*}
Using $t > \frac{3\nu}{\ln (1/\gamma)} - 1$, $\frac{1.5 k \nu}{t + 1} + k\ln \gamma < \frac{k}{2}\ln \gamma$. 
It follows that
\begin{align*}
    \sum_{k = 1}^{\lfloor t/2 \rfloor} \gamma^k \Big(\frac{t+1}{t + 1 - k}\Big)^\nu < \sum_{k=1}^{\lfloor t/2 \rfloor} \gamma^{\frac{k}{2}} < \frac{\sqrt{\gamma}}{1 - \sqrt{\gamma}}.   
\end{align*}
Then, we consider the second half of the summation in \eqref{eq:bounded_sum}, i.e.,  $k \ge \lfloor t/2 \rfloor + 1$, 
\begin{align*}
    \sum_{k = \lfloor t/2 \rfloor + 1}^{t} \gamma^k \Big(\frac{t+1}{t + 1 - k}\Big)^\nu & \le \frac{t+1}{2} \gamma^{\lfloor t/2 \rfloor + 1} (t + 1)^\nu \\
    & \le \frac{1}{2} (t + 1)^{1 + \nu} \gamma^{t/2}   \le \sup_{t \ge 1} \frac{1}{2} (t+1)^{1 + \nu} \gamma^{t/2}.
\end{align*}
By setting $ c_{\gamma, \nu} := \max \Big(c_0, \ \frac{\sqrt{\gamma}}{1 - \sqrt{\gamma}} + \sup_{t \ge 1} \frac{1}{2} (t+1)^{1 + \nu} \gamma^{t/2} \Big)$, 
we obtain the desired upper bound in \eqref{eq:bounded_sum} and establish the lemma.
\end{proof} 

\begin{lemma}[Theorem 1 in \cite{nicholson1979eigenvalue}]
\label{lm:ab+ba}
Let $\bA$ and $\bB$ be positive definite $m$ by $m$ Hermitian matrices with eigenvalues $a_1 \ge a_2 \ge \ldots \ge a_m $, and $b_1 \ge b_2 \ge \ldots \ge b_m$, respectively, and let $\bQ = \bA\bB + \bB\bA$. Then the minimum eigenvalue $ q_m$ of $\bQ$ obeys the inequality, 
\begin{align}
\label{eq:cnbd}
    q_m \ge \min _{r=1, \ldots, m }\left[\frac{a_r b_r}{2}  \frac{\left(\sqrt{k_a}+1\right)^2 - k_b \left(\sqrt{k_a}-1\right)^2}{\sqrt{k_a}}  \right],
\end{align}
where $k_a, k_b$ are the spectral condition numbers of $\bA$ and $\bB$, respectively. 
\end{lemma}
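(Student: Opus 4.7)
The plan is to bound $q_m$ by combining the variational characterization of the smallest eigenvalue of a Hermitian matrix with a reduction to a two-dimensional extremal problem. Since $\bA$ and $\bB$ are both Hermitian, $\bQ = \bA\bB + \bB\bA$ is Hermitian, and
\[
q_m \;=\; \min_{\|\x\|=1}\, \x^{*}\bQ\x \;=\; 2\min_{\|\x\|=1}\,\mathrm{Re}\bigl(\x^{*}\bA\bB\x\bigr).
\]
By a unitary change of basis, I would first reduce to the case $\bA = \mathrm{diag}(a_1, \ldots, a_m)$, while $\bB$ remains an arbitrary positive definite Hermitian matrix with the prescribed eigenvalues $b_1 \ge \cdots \ge b_m$. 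In this basis, $\x^{*}\bA\bB\x$ splits into a diagonal piece $\sum_i a_i B_{ii} |x_i|^{2}$ plus off-diagonal cross terms that are controlled by the angular misalignment between the eigenbases of $\bA$ and $\bB$.

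Second, I would show that the extremum is attained (up to equality) on a two-dimensional invariant-type subspace. The Rayleigh quotient is linear in $\bB$ once $\x$ is fixed, so among all positive definite $\bB$ with the prescribed spectrum, a majorization/rearrangement argument based on a Birkhoff--von Neumann representation of the unitary conjugating $\bB$ into $\bA$'s basis lets one reduce to a configuration in which $\bB$ is effectively supported on a rank-two block aligned with a pair $(a_r, a_{r+1})$ of eigenvalues of $\bA$. This converts the $m$-dimensional problem into the problem of minimizing the smaller eigenvalue of a $2\times 2$ Hermitian matrix parameterized by a single angle $\theta$ encoding the rotation between the two eigenbases within that block.

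The main obstacle is the explicit $2\times 2$ optimization. For $\bA = \mathrm{diag}(a, a')$ and $\bB$ with eigenvalues $b, b'$ rotated by angle $\theta$, one writes down the characteristic polynomial of the $2\times 2$ block of $\bA\bB+\bB\bA$ and solves for its smaller root in closed form. Minimizing this root jointly over $\theta \in [0, \pi/2]$ and over the eigenvalue pair is a Kantorovich-type calculation: the factor $(\sqrt{k_a}-1)^{2}/\sqrt{k_a}$ is the classical Kantorovich signature that arises when bounding $(\x^{*}\bA\x)(\x^{*}\bA^{-1}\x)$, while the multiplicative role of $k_b$ captures the amplification of this penalty due to the spread of $\bB$'s eigenvalues across the $2\times 2$ block. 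After algebraic simplification the extremum matches precisely
\[
\frac{a_r b_r}{2}\cdot \frac{(\sqrt{k_a}+1)^{2} - k_b(\sqrt{k_a}-1)^{2}}{\sqrt{k_a}}.
\]

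To close the argument, I would exhibit a matching extremal $\x$ in the $2 \times 2$ block for each $r$ to show sharpness, then use a continuity/perturbation argument to lift the two-dimensional lower bound back to the $m$-dimensional setting, concluding with the minimum over $r = 1, \ldots, m$. The technically demanding portion is making the reduction-to-$2 \times 2$ step rigorous: one must verify both that the extremal pair $(\x, \bB)$ really localizes to such a block and that no configuration mixing three or more $\bA$-eigendirections can yield a strictly smaller value, a step for which the Kantorovich-type structure of the resulting rational function in $k_a, k_b$ serves as the guide.
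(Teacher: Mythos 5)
The paper does not actually prove Lemma~\ref{lm:ab+ba}; it is imported verbatim as Theorem~1 of \cite{nicholson1979eigenvalue}, so there is no in-paper argument to measure you against, and a self-contained proof would have to stand entirely on its own. Your opening reductions are fine: $\bQ$ is Hermitian, $\x^{*}\bQ\x = 2\,\mathrm{Re}(\x^{*}\bA\bB\x)$, and one may take $\bA$ diagonal. The problem is that everything after that is a plan rather than a proof, and the one concrete mechanism you offer for the crucial step fails. The admissible set of $\bB$'s with prescribed spectrum is the unitary orbit $\{\bU \diag(\bb)\bU^{*}\}$, which is not convex, so linearity of $\bB \mapsto \x^{*}\bA\bB\x$ gives you no license to pass to ``extreme'' configurations; and the Birkhoff--von Neumann theorem only controls the doubly stochastic matrix $(|U_{ij}|^{2})$, whereas with $\bA = \diag(a_1,\dots,a_m)$ one has $\x^{*}\bA\bB\x = \sum_{i,j,k} a_i \overline{x_i}\, U_{ik} b_k \overline{U_{jk}}\, x_j$, which depends on the phases of $\bU$ and is not a function of $(|U_{ij}|^{2})$ alone. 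So the asserted localization of the extremum to a rank-two block aligned with a pair $(a_r, a_{r+1})$ is unsupported, and you yourself flag it as the technically demanding portion without closing it. Two further issues: the final bound pairs $a_r$ with $b_r$ (the $r$-th largest eigenvalues of each matrix), and it is not explained how that pairing would emerge from your $(a_r,a_{r+1})$ block; and the ``sharpness'' step is both unnecessary (the lemma is only an inequality) and, in general, unattainable, so exhibiting a matching extremal $\x$ is not a viable way to ``lift'' the bound back to dimension $m$.

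If you want a route that avoids the localization claim entirely, the standard device for Kantorovich-type bounds of this shape is to fix a unit vector $\x$, write $\bA\x = \alpha\x + \w$ and $\bB\x = \beta\x + \z$ with $\w,\z \perp \x$, $\alpha = \x^{*}\bA\x$, $\beta = \x^{*}\bB\x$, so that $\mathrm{Re}(\x^{*}\bA\bB\x) = \alpha\beta + \mathrm{Re}(\w^{*}\z) \ge \alpha\beta - \|\w\|\,\|\z\|$, and then invoke the variance bounds $\|\w\|^{2} \le (a_1-\alpha)(\alpha-a_m)$ and $\|\z\|^{2} \le (b_1-\beta)(\beta-b_m)$ to reduce to a two-variable scalar minimization over $\alpha \in [a_m,a_1]$, $\beta \in [b_m,b_1]$. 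Whether that calculation reproduces Nicholson's exact constant must still be checked, but it at least replaces the unproved structural claim with inequalities that are actually available. As written, your proposal identifies the right target expression but does not constitute a proof.
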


\begin{lemma}[Lemma A.1 in \cite{jakovetic2023nonlinear}]
\label{lm:finite_bound}
Consider the (deterministic) sequence 
\begin{align*}
    v^{t+1}=\left(1-\frac{a_1}{(t+1)^\delta}\right) v^t+\frac{a_2}{(t+1)^\delta}, t \geq \tilde{t} ,
\end{align*}
with $a_1, a_2 > 0$, $0 < \delta \le 1, \tilde{t} > 0$, and $v^{\tilde{t}} \ge 0$. Further, assume $\tilde{t}$ is such that $a_1 /(t+1)^{\delta} \le 1$ for all $t \ge \tilde{t}$. Then $\lim_{t \rightarrow \infty} v^t = a_2/a_1$. 
\end{lemma}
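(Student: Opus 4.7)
\textbf{Proof proposal for Lemma \ref{lm:finite_bound}.} The plan is to reduce the recursion to a simple multiplicative one by shifting to the candidate limit. Define $u^t := v^t - a_2/a_1$. Substituting into the given recursion, the two constant terms cancel and one obtains the telescoping form
\begin{align*}
u^{t+1} \;=\; \Big(1-\tfrac{a_1}{(t+1)^\delta}\Big)\,u^t, \qquad t \ge \tilde t.
\end{align*}
Since the hypothesis $a_1/(t+1)^\delta \le 1$ guarantees $0 \le 1 - a_1/(t+1)^\delta \le 1$ for all $t \ge \tilde t$, iterating yields the explicit expression $u^t = u^{\tilde t}\prod_{k=\tilde t}^{t-1}\big(1-\tfrac{a_1}{(k+1)^\delta}\big)$, and in particular $|u^t|$ is nonincreasing.

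Next I would bound the product using the elementary inequality $1-x \le e^{-x}$ for $x \in [0,1]$, which gives
\begin{align*}
|u^t| \;\le\; |u^{\tilde t}|\,\exp\!\Big(-a_1 \sum_{k=\tilde t}^{t-1} \tfrac{1}{(k+1)^\delta}\Big).
\end{align*}
Since $0 < \delta \le 1$, the series $\sum_{k \ge 1}(k+1)^{-\delta}$ diverges (for $\delta=1$ this is the harmonic series; for $\delta<1$ integral comparison with $\int s^{-\delta}ds$ gives divergence). Hence the exponent tends to $-\infty$, the right-hand side tends to $0$, and therefore $u^t \to 0$, i.e., $v^t \to a_2/a_1$.

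There is essentially no main obstacle here; the only points to be careful about are (i) verifying that the constant-term cancellation indeed gives exactly $u^{t+1}=(1-a_1/(t+1)^\delta)u^t$ with no residual, (ii) using the assumption $a_1/(t+1)^\delta \le 1$ to ensure the multiplicative factors are nonnegative so that we may take absolute values without sign issues, and (iii) confirming divergence of the harmonic-type sum in the regime $\delta \le 1$ (the reason the statement excludes $\delta > 1$, where the sum would converge and one would get only $v^t \to a_2/a_1 + C$ for some nonzero $C$ depending on $v^{\tilde t}$). The hypothesis $v^{\tilde t}\ge 0$ plays no role in the convergence itself; it is presumably stated only to match the context in which the lemma is applied.
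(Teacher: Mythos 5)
Your proof is correct: the shift $u^t = v^t - a_2/a_1$ does cancel the constant terms exactly, the nonnegativity of the factors follows from the hypothesis on $\tilde t$, and the divergence of $\sum_k (k+1)^{-\delta}$ for $\delta \le 1$ forces the product to zero. Note that the paper itself does not prove this lemma — it is quoted verbatim from Lemma A.1 of the cited reference — but your argument is the standard one for such recursions and matches what that reference does; your closing remarks about the role of $\delta \le 1$ and the inessential hypothesis $v^{\tilde t} \ge 0$ are also accurate.
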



\begin{lemma}[Lemma 3, \cite{polyak1987introduction} page 45]
\label{lm:cvg_diff_main}
Let $v^t$ be a nonnegative (deterministic) sequence satisfying $v^{t+1} \leq\left(1-r_1^t\right) v^t+r_2^t$,
for all $t \ge t'$, for some $t' \ge 0$. Here $\{r_1^t\}$ and $\{r_2^t\}$ are deterministic sequences satisfying
\begin{align*}
    0 < r_1^t \le 1, \quad \sum_{t = t'}^\infty r_1^t = \infty, \quad r_2^t \ge 0, \quad \lim_{t \rightarrow \infty} \frac{r_2^t}{r_1^t} =  0. 
\end{align*}
Then, $\lim_{t \rightarrow \infty} v^t = 0$. 
\end{lemma}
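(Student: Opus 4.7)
The plan is to unroll the linear recursion and control the two resulting pieces separately: a homogeneous ``initial condition'' term, which vanishes because $\sum r_1^t = \infty$, and an inhomogeneous ``forcing'' term, which vanishes because the forcing is eventually small relative to the contraction rate.

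\textbf{Step 1: unrolling.} For $t \ge t'$, iterating the inequality $v^{t+1} \le (1 - r_1^t) v^t + r_2^t$ (where $1 - r_1^t \ge 0$ by assumption so the inequality is preserved) gives
\begin{equation*}
v^{t+1} \;\le\; \Big(\prod_{k=t'}^{t}(1 - r_1^k)\Big) v^{t'} \;+\; \sum_{k=t'}^{t} r_2^k \prod_{j=k+1}^{t}(1 - r_1^j),
\end{equation*}
with the convention that an empty product equals $1$. I will show that both terms on the right tend to $0$ as $t \to \infty$.

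\textbf{Step 2: the homogeneous term.} Using $1 - x \le e^{-x}$ for $x \in [0,1]$,
\begin{equation*}
\prod_{k=t'}^{t}(1 - r_1^k) \;\le\; \exp\Big(-\sum_{k=t'}^{t} r_1^k\Big) \;\longrightarrow\; 0,
\end{equation*}
by the divergence hypothesis $\sum_{k=t'}^\infty r_1^k = \infty$. Since $v^{t'} \ge 0$ is a fixed constant, the first term converges to $0$.

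\textbf{Step 3: the forcing term via telescoping.} The key identity is
\begin{equation*}
r_1^k \prod_{j=k+1}^{t}(1 - r_1^j) \;=\; \prod_{j=k+1}^{t}(1 - r_1^j) - \prod_{j=k}^{t}(1 - r_1^j),
\end{equation*}
so that, summing telescopically,
\begin{equation*}
\sum_{k=t'}^{t} r_1^k \prod_{j=k+1}^{t}(1 - r_1^j) \;=\; 1 - \prod_{j=t'}^{t}(1 - r_1^j) \;\le\; 1.
\end{equation*}
Writing $r_2^k = (r_2^k/r_1^k)\,r_1^k$ therefore expresses the forcing term as a weighted average of the ratios $r_2^k/r_1^k$ with weights summing to at most $1$.

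\textbf{Step 4: split the weighted average.} Fix $\varepsilon > 0$. By the hypothesis $r_2^k/r_1^k \to 0$, choose $T \ge t'$ so that $r_2^k/r_1^k \le \varepsilon$ for every $k \ge T$. For $t \ge T$, split the sum at $T$:
\begin{equation*}
\sum_{k=t'}^{t} r_2^k \prod_{j=k+1}^{t}(1-r_1^j) \;=\; \underbrace{\sum_{k=t'}^{T-1} r_2^k \prod_{j=k+1}^{t}(1-r_1^j)}_{=:A_t} \;+\; \underbrace{\sum_{k=T}^{t} r_2^k \prod_{j=k+1}^{t}(1-r_1^j)}_{=:B_t}.
\end{equation*}
For $B_t$, factoring out $\varepsilon$ and reusing the telescoping bound from Step 3 applied to the tail sum from $T$ to $t$ yields $B_t \le \varepsilon$. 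For $A_t$, factor $\prod_{j=k+1}^{t}(1-r_1^j) = \prod_{j=k+1}^{T-1}(1-r_1^j)\cdot \prod_{j=T}^{t}(1-r_1^j)$ to obtain
\begin{equation*}
A_t \;\le\; \Big(\prod_{j=T}^{t}(1 - r_1^j)\Big)\sum_{k=t'}^{T-1} r_2^k \prod_{j=k+1}^{T-1}(1 - r_1^j),
\end{equation*}
in which the sum is a fixed constant (independent of $t$) and the leading product tends to $0$ by the same argument as in Step 2. Hence $A_t \to 0$.

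\textbf{Step 5: conclusion.} Combining, $\limsup_{t\to\infty} v^{t+1} \le 0 + 0 + \varepsilon = \varepsilon$. Since $\varepsilon > 0$ is arbitrary and $v^t \ge 0$, we conclude $\lim_{t\to\infty} v^t = 0$.

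The only delicate point is Step 4: one must separate indices where $r_2^k/r_1^k$ is not yet small (finitely many, so their contribution is killed by the remaining product of contractions) from the tail where $r_2^k/r_1^k \le \varepsilon$ (whose contribution is controlled by the telescoping identity uniformly in $t$). Everything else is routine manipulation of the unrolled recursion.
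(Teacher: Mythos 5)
Your proof is correct and complete. The paper does not prove this lemma at all—it simply cites it from Polyak's book—so there is no in-paper argument to compare against; your unroll-telescope-and-split argument is the standard, self-contained proof of this classical result, and every step (the nonnegativity of $1-r_1^t$ needed to iterate the inequality, the telescoping identity bounding the weights by $1$, and the split at $T$) checks out.
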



\bibliographystyle{IEEEtran}
\bibliography{refs}
        
\end{document}